\newtheorem{theorem}{Theorem}[section]
\newtheorem{lemma}{Lemma}[section]
\newtheorem{corollary}{Corollary}[section]
\newtheorem{proposition}{Proposition}[section]
\theoremstyle{definition}
\newtheorem{definition}[theorem]{Definition}
\newtheorem{assumption}[theorem]{Assumption}
\newcommand{\lspan}{{\rm span \,}}
\theoremstyle{remark}
\newtheorem{remark}[theorem]{Remark}
\numberwithin{equation}{section}
\newcommand{\cin}{C^\infty}
\newcommand{\what}{\ensuremath{\widehat}}
\newcommand{\R}{\ensuremath{\mathbb{R}}}
\newcommand{\norm}[1]{\|#1 \|}
\newcommand{\norml}[1]{\left\|#1 \right\|}
\newcommand{\Hc}{{\mathcal{H}}}
\newcommand{\Lc}{{\mathcal{L}}}
\newcommand{\be}[1]{\begin{equation}\label{#1}}
\newcommand{\ee}{\end{equation}}
\newcommand{\ino}{{i_1}}
\newcommand{\ind}{{i_2}}
\newcommand{\st}{\; : \;}
\newcommand{\rref}[1]{(\ref{#1})}
\newcommand{\Uc}{{\mathcal  U}}
\newcommand{\Oc}{{\mathcal  O}}
\newcommand{\Rc}{{\mathcal R}}
\newcommand{\bd}[1]{\begin{definition}\label{#1}}
\newcommand{\ed}{\end{definition}}
\newcommand{\bt}[1]{\begin{theorem}\label{#1}}
\newcommand{\et}{\end{theorem}}
\newcommand{\bc}[1]{\begin{corollary}\label{#1}}
\newcommand{\ec}{\end{corollary}}
\newcommand{\Rb}{{\mathbb R}}  
\newcommand{\ip}[2]{\langle #1 , #2 \rangle}
\newcommand{\bp}[1]{\begin{proposition}\label{#1}}
\newcommand{\ep}{\end{proposition}}
\newcommand{\bpr}{{\em Proof. }}
\newcommand{\epr}{\hfill $\square$}
\newcommand{\comment}[1]{{ }}
\newcommand{\whp}[1]{\what{P}_{#1}}
\newcommand{\whq}[1]{\what{Q}_{#1}}
\newcommand{\wha}[1]{\what{A}_{#1}}
\newcommand{\whr}[1]{\what{R}_{#1}}
\newcommand{\whv}[1]{\what{V}_{#1}}
\newcommand{\dop}[1]{\frac{d}{d #1}}
\newcommand{\ad}{\mathrm{ad}\,}
\begin{document}

\title[Extension of Chronological Calculus]{Extension of Chronological Calculus for Dynamical Systems on Manifolds}

\author[Kipka]{Robert J. Kipka}
\address[Kipka]{Department of Mathematics, Western Michigan University, Kalamazoo, MI, 49008-5248}
\email{robert.j.kipka@wmich.edu}

\author[Ledyaev]{Yuri S. Ledyaev}
\address[Ledyaev]{Department of Mathematics, Western Michigan University, Kalamazoo, MI, 49008-5248}
\email{iouri.lediaev@wmich.edu}

\begin{abstract}
We propose an extension of the Chronological Calculus, developed by Agrachev and Gamkrelidze for the case of $\cin$-smooth dynamical systems on finite-dimensional $\cin$-smooth
manifolds, to the case of $C^m$-smooth dynamical systems and infinite-dimensional
$C^m$-manifolds. Due to a relaxation in the underlying structure of the calculus, this extension provides a powerful computational tool without recourse to the
theory of calculus in Fr\'echet spaces required by the classical Chronological Calculus. In addition, this extension accounts for flows of vector fields which are merely measurable in time. To demonstrate the utility of this extension, we prove a variant of Chow-Rashevskii theorem for  infinite-dimensional manifolds.
\end{abstract}



\maketitle

\tableofcontents

\section{Introduction and Background}

In the 1970s, Agrachev and Gamkrelidze suggested in \cite{MR524203,MR579930} the Chronological Calculus for the analysis of
$C^\infty$-smooth dynamical systems on
finite-dimensional manifolds (for a textbook exposition see \cite{MR2062547}). The central idea of this calculus is to consider flows of
dynamical systems as linear operators on the space of $C^\infty$-smooth scalar functions. Such ``linearization'' of
flows on manifolds presents significant advantages from the point of view of defining derivatives of flows, developing
a calculus of such derivatives, and effective computations of formal power series representing flows.

But in addition to these desirable properties, the Chronological Calculus poses some interesting problems. The space of $C^\infty$-smooth scalar functions is a Fr\'echet space with topology given by a countable family of seminorms and this
complicates the proofs of the calculus rules given in \cite{MR579930,MR2062547}. The approach also
requires the strong assumption of $C^\infty$-smoothness of dynamical systems and manifolds, even if for many applications
only finite sums of Volterra-like series representing flows are enough \cite{MR2374261}.

Another restriction of the classical Chronological calculus (which is important from the point of view of applications to control systems on manifolds) is its treatment  of nonautonomous vector fields which depend on $t$ in measurable way. In particular, there is no variant of product rule in the classical Chronological Calculus which can be used for such flows.

In this paper we extend the Chronological Calculus so as to require only $C^m$-smoothness of dynamical
systems and manifolds. The result is a computationally effective version of the Chronological Calculus without recourse to Fr\'echet spaces. Moreover, in the framework of this extension we provide a ``distributional'' version of product rule which
can be applied for nonautonomous flows with only measurable in $t$  vector fields. Thus we give details for a rule which are lacking in the description of the classical Chronological Calculus \cite{MR524203,MR579930,MR2062547,MR1012328}, even for finite-dimensional manifolds.

 Further, this extension allows analysis of dynamical systems on infinite-dimensional manifolds, which are interesting from the point of view of applications to the theory of partial
differential equations. We also develop a calculus of remainder terms (calculus of ``little $o$'s'') which is used for the effective calculation of representations of  brackets of  flows in terms of respective brackets of vector fields on infinite-dimensional
manifolds and which provides an algorithm for the computation of remainder terms in such representations. Finally, we use these results for proving a generalization
of Chow-Rashevskii theorem for infinite-dimensional manifolds.

In order to take a comprehensive approach to the problem, we begin by recalling some facts of calculus and differential geometry in Banach spaces.

\subsection{Calculus in a Banach Space}

Let $E$ and $F$ be Banach spaces. A map $f : E \rightarrow F$ is said to be \emph{differentiable} at $x_0$ if there exists a bounded linear operator
$f^\prime(x_0) : E \rightarrow F$ such that for all $x \in E$ we have
$f(x) = f(x_0) + f^\prime(x_0)\left(x - x_0\right) + o \left( \left\|x - x_0\right\| \right)$.
If $f$ is differentiable on all of $E$, then we have $f^\prime : E \rightarrow L(E,F)$, where $L(E,F)$ is the Banach space of bounded linear operators
from $E$ to $F$. When $f^\prime$ is continuous, we say that $f$ is of class $C^1$. As a map between Banach spaces, we may then ask if $f^\prime$ is differentiable and so on.
If $f$ has $m$ continuous derivatives, then we say that $f$ is of class $C^m$. The $m^{th}$ derivative at a point $x_0$ may be identified with an $m$-multilinear map
$\underbrace{E \times \dots \times E}_{m\; \mathrm{copies}} \rightarrow F$ and the space of such maps is again a Banach space with norm
 \begin{equation*}
   \left\|A\right\|  =\sup \left\{ \left\|A(x_1,\dots, x_m)\right\| \, : \, \left\|x_1\right\| = \dots = \left\|x_m\right\| = 1 \right\}.
 \end{equation*}

Functions which take values in a Banach space can also be integrated. For a rigorous introduction to the integration of vector-valued functions, we recommend   \cite{diestel-uhl1977}. We briefly describe here the \emph{Bochner integral} for functions $f : \left[t_0,t_1\right] \rightarrow E$, where $E$ is a Banach space. As one might expect, a function $f : \left[t_0, t_1\right] \rightarrow E$ is said to be \emph{simple} if it takes on only finitely many values, say $\left[t_0,t_1\right] = \cup_{i = 1}^k A_i$, with $A_i$ disjoint measurable sets and $\left. f \right|_{A_i} = f_i \in E$. For simple functions one then defines
\begin{equation*}
  \int_{t_0}^{t_1} f \, dt = \sum_{i = 1}^k f_i \mu(A_i),
\end{equation*}
where $\mu$ is Lebesgue measure. If $E$ is a Banach space, a function $f : \left[t_0, t_1\right] \rightarrow  E$ is said to be \emph{measurable} if it is a pointwise limit of a sequence  of simple functions, say $f_n \to f$.   Measurable function $f$ is said to be \emph{Bochner integrable} if
 $\displaystyle \lim_n \int_{t_0}^{t_1} \norm{f-f_n}dt =0$ for some sequence of simple functions $f_n$ . In this case the \emph{Bochner integral} of $f$ is defined as
\[
  \int_{t_0}^{t_1} f(t) \, dt = \lim_{n} \int_{t_0}^{t_1} f_n(t) \, dt.
\]
  It is worth noting that when $E = \R^n$, the Bochner integral is the same as the Lebesgue integral. In general Banach spaces, the Bochner integral retains many desirable properties of the Lebesgue integral. In particular, one has
\begin{equation}
\label{eq:lebesgue}
  \frac{d}{dt} \int_{t_0}^t f(\tau) \, d \tau = f(t)
\end{equation}
for almost all $t$ in $\left[t_0,t_1\right]$. Function $F(t)$ is called \emph{absolutely continuous} if  $F(t)=F(t_0)+\int_{t_0}^t f(\tau) \, d \tau$ for some integrable $f$. This and other properties of Bochner integral are given a clear treatment in \cite{diestel-uhl1977}.

\subsection{Differential Equations and Flows in Banach Space}

We recall some results from the theory of differential equations in Banach spaces. In particular, we are interested in equations of the form
\begin{equation}
  \label{eq:banach-ivp}
  \dot{x} = f(t,x) \hspace{2cm} x(t_0) = x_0
\end{equation}
where $f : J \times E \rightarrow E$ and $J \subseteq \R$ is an interval containing $t_0$. An excellent resource for this material is \cite{MR0463601}. There it is demonstrated that in a Banach space, continuity of $f$ is not enough to ensure a solution. We introduce the following definitions for vector fields on $E$:
\begin{definition}
  A \emph{nonautonomous $C^m$ vector field} on $E$ is a function $f : J \times E \rightarrow E$ which is measurable in $t$ for each fixed $x$ and
$C^m$ in $x$ for almost all $t$.
\end{definition}

\begin{definition}
\label{defn:locally-integrably-bounded}
A nonautonomous $C^m$ vector field on $E$ is said to be \emph{locally integrable bounded} if for any $x_0 \in E$, there exists an open neighborhood $U$ of $x_0$ and $k \in L^1(J,\R)$ such that for all $x \in U$, for all $0 \le i \le m$, we have $\left\|f^{(i)}(t,x) \right\| \le k(t)$ for almost all $t$,
  where $f^{(i)}$ denotes the $i^{th}$ derivative of $f$ with respect to $x$.
\end{definition}

\begin{definition}
\label{defn:locally-bounded}
A nonautonomous $C^m$ vector field on $E$ is said to be \emph{locally bounded} if for any $x_0 \in E$, there exists an open neighborhood $U$ of $x_0$ and $K  \ge 0$ such that for all $x \in U$, for all $0 \le i \le m$, we have $\left\|f^{(i)}(t,x) \right\| \le K$ for almost all $t$.
\end{definition}

Notice that any autonomous $C^m$ vector field is locally bounded. It can be shown that if $f : J \times E \rightarrow E$ is a nonautonomous $C^m$ vector field that is locally integrable bounded, then for any $(t_0,x_0)$ there exists an open interval $J_0\subset J$ containing $t_0$ and depending on $(t_0,x_0)$ as well as a unique, absolutely continuous function $x : J_0 \rightarrow E$ which satisfies \eqref{eq:banach-ivp} for almost all $t \in J_0$. This type of solution is called a \emph{Carath\'eodory} solution. In addition, the dependence of this solution upon the initial condition $x_0$ is $C^m$-smooth. More precisely, if $x(t;t_0,x_0)$ denotes the solution to \eqref{eq:banach-ivp}, then $x_0 \mapsto x(t;t_0,x_0)$ is $m$ times continuously differentiable for appropriate values of $t$ and $x_0$.

We will write $P_{t_0,t}$ for the \emph{local flow} $x_0 \mapsto x(t;t_0,x_0)$.
Uniqueness of solutions gives us the following properties for the flow:
\begin{align}\label{sgp}
  P_{s,t} \circ P_{t_0,s}(x) & = P_{t_0,t}(x) \\
  P_{t_0,t}^{-1}(x) & = P_{t,t_0}(x)
\end{align}
When the underlying vector field is autonomous, we will write $P_t$ for $P_{0,t}$. One may then obtain the following local semigroup properties for the flow:
\begin{align*}
  P_s \circ P_t (x) & = P_{s+t}(x) \\
  P_t^{-1}(x) & = P_{-t}(x),
\end{align*}
provided that $t,s,t+s$, and $-t$ lie in $J_0$, an interval which in general will depend on $x$.

\subsection{Smooth Manifolds}

In defining dynamical systems, it is enough for the underlying space to have the structure of a Banach space only locally. In this section we remind the reader of some definitions and basic results from the theory of smooth manifolds.  For a greater level of detail, we suggest \cite{MR1666820}.

A \emph{Banach manifold} of class $C^m$ over a Banach space $E$ is a paracompact Hausdorff space $M$ along with a collection of coordinate charts
$\left\{\left(U_\alpha, \varphi_\alpha\right) \, : \, \alpha \in A\right\}$, where $A$ is an indexing set. This collection of charts should be such that
the collection $\left\{U_\alpha\right\}$ is a cover for $M$; each $\varphi_\alpha$ is a bijection of $U_\alpha$ with an open subset of $E$; and
 the transition maps $\varphi_\alpha \circ \varphi_\beta^{-1} : \varphi_\beta(U_\alpha \cap U_\beta) \rightarrow \varphi_\alpha(U_\alpha \cap U_\beta)$
  are of class $C^m$.

If $M$ and $N$ are Banach manifolds, a function $f : M \rightarrow N$ is said to be \emph{$C^m$-smooth} (or $C^m$ for brevity) if for any coordinate charts $\varphi : U \subseteq M \rightarrow E$ and $\psi : V \subseteq N \rightarrow F$ the map $\psi \circ f \circ \varphi^{-1}$ is a $C^m$-smooth mapping of Banach spaces. Analogously, a function $f : M \rightarrow N$ is \emph{differentiable} at a point $q_0$ if $\psi \circ f \circ \varphi^{-1}$ is differentiable at $\varphi(q_0)$.

The \emph{tangent space} to $M$ at $q$ is defined  as follows. Consider the collection
of differentiable curves $\gamma : \R \rightarrow M$ with $\gamma(0) = q$ and define an equivalence relation on this collection by $\gamma_1 \sim \gamma_2$ if
and only if $\left( \varphi \circ \gamma_1\right)^\prime(0) =\left( \varphi \circ \gamma_2\right)^\prime(0) $ for some coordinate chart $\varphi$. One can check that if this relationship holds for one coordinate chart, it will hold for all coordinate charts. We write $\left[\gamma\right]$ for the equivalence class of a curve $\gamma$. The collection of these equivalence classes forms the tangent space $T_qM$ and there is a natural isomorphism $T_qM \leftrightarrow E$.

Every $C^m$  map $f : M \rightarrow N$ induces a map from $T_qM$ to $T_{f(q)}N$ by $\left[\gamma\right] \mapsto \left[f \circ \gamma \right]$ and we denote this mapping by $f_*(q)$. The tangent bundle $TM$ is the union of the tangent spaces with a topology given locally by the charts $(q,v) \mapsto \left(\varphi(q), \varphi_*(q)v\right)$, where $\varphi$ is a coordinate chart for $M$. When $f$ is a map between linear spaces $E$ and $F$ we will write $f^\prime$ for its derivative. When $f$ is a map between Banach manifolds, we will write $f_*$ for the corresponding map from $TM$ to $TN$. We emphasize that in local coordinates, $f_*(q) : T_qM \rightarrow T_{f(q)}N$ is the map given by $v \mapsto f^\prime(q)v$. In contrast, the map $f_* : TM \rightarrow TN$ sends a pair $(q,v)$ to the pair $(f(q), f_*(q)v)$.

\subsection{Vector Fields and Flows on Manifolds}

Let $\pi : TM \rightarrow M$ be the projection $(q,v) \mapsto q$. A \emph{nonautonomous vector field} is a mapping
$V : \R \times M \rightarrow TM$ which satisfies $\pi \circ V_t(q) = q$. Given $q_0 \in M$ and a coordinate chart $\left(\varphi, U\right)$ at $q_0$, the function $J \times E \rightarrow E$ given by
\begin{equation}
\label{eq:local-representation}
  (\varphi_* V_t)(x) := \varphi_* \left(\varphi^{-1}(x) \right) V_t\left(\varphi^{-1}(x) \right)
\end{equation}
is the \emph{local coordinate representation for $V_t$}. Recalling definition \ref{defn:locally-integrably-bounded} we introduce

\begin{definition} \label{defn:locally-integrably-bounded-manifold} A nonautonomous vector field on $M$ is said to be a \emph{locally integrable bounded $C^k$  vector field} if it is $C^k$-smooth in $q$ for almost all $t$, is measurable in $t$, and in some neighborhood of each $q \in M$ there is a coordinate representation \rref{eq:local-representation} which is locally integrable bounded.
\end{definition}

Similarly, recalling definition \ref{defn:locally-bounded}, we introduce

\begin{definition} \label{defn:locally-bounded-manifold} A nonautonomous vector field on $M$ is said to be a \emph{ locally bounded $C^k$ vector field} if it is $C^k$-smooth in $q$ for almost all $t$, is measurable in $t$, and in some neighborhood of each $q \in M$ there is a coordinate representation \rref{eq:local-representation} which is locally bounded.
\end{definition}

If $x(t)$ is a solution for the differential equation $\dot{x} = (\varphi_* V_t)(x)$ on $E$ with initial condition $x(t_0) = \varphi(q_0)$, then $q(t) = \varphi^{-1} \circ x(t)$ is a solution to the differential
equation on $M$
\be{mde}
\dot{q} = V_t(q), \quad q(t_0)=q_0.
\ee
For any $\varphi\in C^m(M,E)$ we have the following integral representation
\be{rie}
\varphi(q(t))=\varphi (q_0)+\int_{t_0}^t \varphi_* (q(\tau))V_\tau(q(\tau))\, d\tau.
\ee
 With each  nonautonomous  vector field $V_t$ on $M$, we associate a local flow $P_{t_0,t}$ given by $q_0 \mapsto q(t;t_0,q_0)$, the solution to \rref{mde}
with initial condition $q(t_0) = q_0$. In the case of autonomous vector fields $V$ we consider a local flow $P_t:q_0\mapsto q(t;0,q_0)$. These flows are $C^m$ diffeomorphisms of $M$ and  are of central importance in the development of our extension of the Chronological Calculus, which we now turn to.

\section{Extension of Chronological Calculus}

The main observation behind the Chronological Calculus \cite{MR524203,MR579930,MR2062547} is that one may trade analytic objects such as diffeomorphisms and vector fields for
algebraic objects such as automorphisms and derivations of the algebra $C^\infty(M)$, which is the collection of $C^\infty$  mappings $f : M \rightarrow \R$. This correspondence is developed in \cite{MR524203,MR579930,MR2062547},
where $C^\infty(M)$ is given the structure of a Fr\'echet space. Below
we develop a streamlined version of the theory which is effective for computations with infinite-dimensional $C^m$-manifolds and dynamical systems. In order
to include Banach spaces in the theory, we consider the vector space $C^m(M,E)$ of $C^m$ functions $f: M \rightarrow E$ rather than the algebra of scalar functions  $C^\infty(M)$.

\subsection{Chronological Calculus Formalism: flows as linear operators}

We begin by defining the following operators:
\begin{enumerate}
\item[i.] The \emph{identity} operator $\what{Id}_M$ is defined as follows
$\what{Id}_M(\varphi)=\varphi$ for any $\varphi\in C(M,E)$.
  \item[ii.] Given any point $q \in M$, let $\what{q} : C^m(M,E) \rightarrow E$ be the linear map given by $\what{q}(\varphi) := \varphi(q)$.
  \item[iii.] Given $C^m$-manifolds $M$ and $N$ over a Banach space $E$ and a $C^m$  map $P : M \rightarrow N$, let
  $\what{P} : C^m(N,E) \rightarrow C^r(M,E)$ ($0 \le r \le m$) be the linear map given by $\what{P}(\varphi) := \varphi \circ P$. Note that if $P$ is a diffeomorphism of $M$, $\what{P}$
  gives us an isomorphism of $C^m(M,E)$.
  \item[iv.] Given a tangent vector $v \in T_qM$, let $\what{v} : C^m(M,E) \rightarrow E$ be the linear map given by $\what{v}(\varphi) := \varphi_*(q)v$.
  \item[v.]   Given any $C^m$ vector field $V$ on $M$, we define a linear map $\what{V} : C^m(M,E) \rightarrow C^{m-1}(M,E)$  by
  $\what{V}(\varphi) : q \mapsto \varphi_*(q)V(q)$.
  \item[vi.]  Denote by $\what{o}(t): C^m(M,E) \rightarrow C^r(M,E)$ ($0 \le r \le m$) a linear operator which has the following property: for any $\varphi \in C^m(N,E)$ and $q_0\in M$ there exists a neighbourhood $U$ such that
\be{os-d}
\lim_{t\to+0}\frac{\norm{\what{o}(t)(\varphi) (q)}}{t}=0
\ee
uniformly with respect to $q\in U$. Later we will develop a more detailed definition of such operators, as well as several useful examples.
\end{enumerate}

Of course, we can consider linear combinations of such   linear operators.

When $\varphi$ is a local diffeomorphism, these operators simply give local coordinate expressions.
We need not restrict ourselves to the space $C^m(M,E)$. Indeed, given any open set $U \subseteq M$, we may view $U$ as a Banach manifold
in its own right and therefore consider the space $C^m(U,E)$. For example, the local flow $P_{t_0,t} : J_0 \times U \rightarrow U$
of a vector field $V_t$ gives rise to a family of linear mappings $\what{P}_{t_0,t} : C^m(U,E) \rightarrow C^m(U,E)$.

Note that for operators $\what{P}$ the semigroup property \rref{sgp} for flow of diffeomorphism $P_{t_0,t}$
becomes
\be{sgp-o}
\what{P}_{t_0,s}\circ \what{P}_{s,t}=\what{P}_{t_0,t}.
\ee

An operator $\what{o}(t)$ \rref{os-d} will play an important role in the calculus of remainder terms  which will be developed later.
For an example of such operator $\what{o}(t)$ we consider a flow operator $\what{P}_{t}$ for an   autonomous vector field $V$. It follows from \rref{rie} that for the following operator
\be{os-e}
\what{o}(t):=\what{P}_{t} -\what{Id}_M-t\what{V}.
\ee
and a function $\varphi\in  C^m(M,E)$ we have that for any $q_0\in M$
\be{os-r}
\what{o}(t)(\varphi)(q)=\int_0^t (\varphi_*(P_s (q))V(P_s(q))-\varphi_*(q)V(q))\ d s
\ee
for all $q$ in a neighborhood of $q_0$.
  This representation implies that the operator  \rref{os-e}
satisfies \rref{os-d}.

\subsection{Differentiation and integration of operator-valued functions}

Consider an operator-valued function $t\to A_t$  whose values are linear mappings $A_t : C^m(M,E) \rightarrow C^p(M,E)$. This function is called {\em integrable} if for any $\varphi \in C^m(M,E)$ and $q\in M$ the function
$t\to A_t(\varphi) (q)$ is integrable.
Then the linear operator  $\left(\displaystyle \int_{t_0}^{t_1} A_\tau \, d \tau \right): \ C^m(M,E)\to C^{r}(M,E)$ is defined as follows
\[
 \left(\displaystyle \int_{t_0}^{t_1} A_\tau \, d \tau \right) (\varphi) (q) := \displaystyle \int_{t_0}^{t_1} A_\tau(\varphi)(q) \, d \tau
\]
It follows immediately from \rref{mde} and \rref{rie} that the flow operator
 $\what{P}_{t_0,t}$  representing
flow of diffeomorphisms for a nonautonomous vector field $V_t$ satisfies the integral
equation
\be{iem}
 \what{P}_{t_0,t} = \what{Id}_M+ \displaystyle \int_{t_0}^t \what{P}_{t_0,\tau} \circ \what{V}_\tau \, d \tau.
\ee
Moreover, we have that the unique operator valued solution of the integral
equation \rref{iem} is the function $t\to \what{P}_{t_0,t}$.

Now we introduce a concept of \emph{differentiability} of an operator-valued
function $A_t$.
The operator-valued function $A_t: C^m(M,E) \rightarrow C^r(M,E)$ is called
{\em differentiable} at $t$ if there exists a linear operator $B_t:
C^r(M,E) \rightarrow C^s(M,E)$
\be{dif-d}
A_{t+h}=A_t+ h\, B_t+\what{o}(h).
\ee
 The operator $\displaystyle \dfrac{d A_t}{dt}:=B_t$ is the \emph{derivative} of $A_t$.

This definition is well-suited for an operator $ \what{P}_{t_0,t}$ arising
from flow
diffeomorphisms representing differential equation \rref{mde} in the case
of the nonautonomous vector field $V_t$ which is continuous in $t$.
Namely, we have from the semigroup property  \rref{sgp-o} that
\[
\what{P}_{t_0,t+h}-\what{P}_{t_0,t}-h \what{P}_{t_0,t}\circ \what{V}_t=\what{P}_{t_0,t}\circ (\what{P}_{t,t+h}-\what{Id}_M-h  \what{V}_t).
\]
The last expression can be represented as
\be{ode-os}
\what{P}_{t_0,t}\circ \int_t^{t+h}(\what{P}_{t,s}\circ \what{V}_s -\what{V}_t)\ ds.
\ee
Using a representation for \rref{ode-os}  similar to the one from  \rref{os-r} and continuity $V_t$ in $t$, we obtain that  \rref{ode-os} is $\what{o}(h)$.

Thus, we have derived the representation
\be{fod}
\what{P}_{t_0,t+h}=\what{P}_{t_0,t}+h \what{P}_{t_0,t}\circ \what{V}_t+\what{o}(h).
\ee
This means that $t\to\what{P}_{t_0,t}$ is differentiable and
 for every $t$,
\begin{equation*}
\frac{d}{dt} \what{P}_{t_0,t}=\what{P}_{t_0,t}\circ \what{ V}_t.
\end{equation*}
We see that in this continuous in time $V_t$ case, the operator-valued function $\what{P}_{t_0,t}$ satisfies the differential equation
\be{dem}
\dfrac{d \what{P}_{t_0,t}}{dt} = \what{P}_{t_0,t} \circ \what{V}_t, \quad
\what{P}_{t_0,t_0}=\what{Id}_M.
\ee
It is easy to check that $\what{P}_{t_0,t}$ is the unique solution of
this operator differential equation and also of the operator integral equation \rref{iem}.

However, in the case when the vector field $V_t$ is only integrable in $t$  then a Carath\'eodory solution
$q(t)$ of the differential equation \rref{mde}  is an absolutely
continuous function and we cannot guarantee that $\what{P}_{t_0,t}$ is differentiable
for every $t$.

An operator-valued function $\what{A}_t$ is called \emph{absolutely continuous} on $[a,b]$ if $
\what{A}_t=\what{A}_{t_0}+\int_{t_0}^t \what{B}_\tau\, d\tau$ for any $t\in [a,b]$ for some integrable operator-valued function $\what{B}_t$. We denote $\what{B}_t$ as $\displaystyle \frac{d}{dt} \what{A}_t$ and understand this derivative in the sense of distributions\footnote{We use a term \emph{distribution}  by analogy with a  concept of a generalized derivative as a   distribution
 in the theory of linear partial differential operators (see \cite{horm}).}: for any
$t_1,t_2\in [a,b] $, for any $\varphi \in C^m(M,E)$ and $q\in M$
\[
\wha{t_2}(\varphi)(q) -\wha{t_1}(\varphi)(q)=\int_{t_1}^{t_2} \frac{d }{dt} \wha{t} \, (\varphi)(q) \, dt.
\]
\begin{remark}\label{acvf}
Let $W$ be a $C^m$ vector field and $\wha{t}$ is absolutely continuous then
$
\wha t\circ \what{W}
$
is also absolutely continuous and $\displaystyle \dop{t}(\wha{t} \circ W)=
\dop{t}\wha{t} \circ W$.
\end{remark}
Note that in the case when absolutely continuous operator-valued function $\what{A}_t$ is defined by a flow of diffeomorphisms $P_t:M\to M $ then for any
$q\in M$ the derivative $\displaystyle \dop{t} P_t(q)$ exists for a.a. $t\in [a,b]$.

As we demonstrated before, for measurable in $t$ vector fields $V_t$ the flow operator
$\what{P}_{t_0,t}$ is the unique absolutely continuous solution of the
integral operator equation \rref{iem}. In view of the definition of
the derivative of absolutely continuous operator-valued function, $\whp{t}$ is also unique solution of the operator differential equation
\rref{dem} in the sense of distributions.

\subsection{ Product Rules}

Here we discuss product rules for operator-valued functions
$\what{P}_t$ and $\what{Q}_t$.  We first establish such product rule
for the case when these functions are differentiable at $t$ in the sense of \rref{dif-d}, namely
\be{dif-pq}
\what{P}_{t+h}=\what{P}_t+h\frac{d}{dt}\what{P}_t  +\what{o}_1(h),  \quad
\what{Q}_{t+h}=\what{Q}_t+h\frac{d}{dt}\what{Q}_t   +\what{o}_2(h)
\ee
for some operators $\displaystyle \frac{d}{dt}\what{P}_t$ and $\displaystyle \frac{d}{dt}\what{Q}_t$.

\bt{pr1-t} Let operator-valued function $\what{P}_t$ and $\what{Q}_t$ be differentiable at $t$ and remainder terms $\what{o}_1$ and $\what{o}_2$ have the property
\be{rt1}
\what{o}_1(h)\circ\frac{d }{dt} \what{Q}_t+\frac{d }{dt}\what{P}_t\circ\what{o}_2(h)+ \what{o}_1(h)\circ\what{o}_2(h)=\what{o}(h).
\ee
Then operator-valued function $\what{P}_t\circ\what{Q}_t$ is differentiable at $t$ and
\be{pr1}
\frac{d}{dt}(\what{P}_t\circ\what{Q}_t)=\frac{d}{dt}\what{P}_t\circ\what{Q}_t+\what{P}_t\circ\frac{d}{dt}\what{Q}_t
\ee
\et
\begin{proof} It follows from \rref{dif-pq} and \rref{rt1} that
\[
\what{P}_{t+h}\circ\what{Q}_{t+h}=\what{P}_t\circ\what{Q}_t+h(\frac{d}{dt}\what{P}_t\circ\what{Q}_t+\what{P}_t\circ\frac{d}{dt}\what{Q}_t )+\what{o}(h)+\what{o}_1(h)\circ\what{Q}_t+\what{P}_t\circ\what{o}_2(h)
\]
But it is easy to see that the sum of last three terms is again operator $\what{o}(h)$ (see \rref{os-d}). This implies the differentiability of the product  $\what{P}_t\circ\what{Q}_t$ and the product rule \rref{pr1}.
\end{proof}

Thus, validity of a product rule in the form \rref{pr1} is reduced to the verification of the condition \rref{rt1}. We can verify directly that
\rref{rt1} holds
for flow operators $\what{P}_t$ and $\what{Q}_t$ which are operator solutions of the operator equation \rref{dem} or equation
\be{dem2}
\frac{d}{dt}\what{Q}_t=\what{W}_t\circ\what{Q}_t
\ee
for continuous in $t$ vector fields $V_t$ and $W_t$.

Now we consider a product rule in the sense of distributions  for  absolutely
continuous operator-valued functions $\what{P}_t$ and $\what{Q}_t$ which
 are represented for any $t\in(a,b)$ as
\be{pr-rep}
\whp t=\whp{t_0}+\int_{t_0}^t \frac{d}{d\tau}\what{P}_\tau\ d\tau, \quad
\whq t=\whq{t_0}+\int_{t_0}^t \frac{d}{d\tau}\what{Q}_\tau\ d\tau,
\ee
\begin{assumption}\label{pr-ass}\em
Let  $\whp t, \whq t$ be  absolutely continuous operator-valued functions
such that  for any $\varphi\in C^m(M,E)$ and $q \in M$ \\
(i) Function $\displaystyle t \to \whp{t} \circ \whq{t}(\varphi)(q)$ is continuous on $ (a,b)$.\\
(ii) Functions
\[
(t,\tau)\to  \dop{\tau} \whp{\tau}\circ \whq{t}(\varphi)(q),
\quad (t,\tau)\to \whp{t}\circ \dop{\tau} \whq{\tau}(\varphi)(q)
\]
are integrable on $(a,b)\times(a,b)$.\\
(iii) For any $t,t_1,t_2\in (a,b)$
\[
\int_{t_1}^{t_2}\dop{\tau} \whp{\tau}d\tau \circ \whq{t}(\varphi)(q)=
\int_{t_1}^{t_2}\dop{\tau} \whp{\tau}\circ \whq{t}(\varphi)(q)d\tau,
\]
\[
\whp{t}\circ \int_{t_1}^{t_2}\dop{\tau} \whq{\tau}d\tau (\varphi)(q)=
\int_{t_1}^{t_2} \whp{t}\circ \dop{\tau} \whq{\tau} (\varphi)(q) d\tau
\]
(iv) There exists an integrable function $k_1(\tau)$ such that for all small $h$,  all $t\in [\tau-h,\tau]$ and a.a. $\tau\in (a,b)$
\be{pr-ass2}
\norm{\dop{\tau} \whp{\tau}\circ \whq{t} (\varphi)(q)}\le k_1(\tau) \quad \norm{ \whp{t+h}\circ\dop{\tau} \whq{\tau} (\varphi)(q)}\le k_1(\tau)
\ee
\end{assumption}

Note that if $\whp t$,$\whq t$ are  absolutely continuous solutions
of \rref{dem} or \rref{dem2}, or they  are of the type presented in  Remark \ref{acvf} with $\wha t$ being a solution of \rref{dem} or \rref{dem2} then conditions {\em (i)-(iv)} are satisfied when $V_t$ and $W_t$ are locally integrable bounded.

\bt{pr-th} Let absolutely continuous operator-valued function
$\whp{t}$ and $\whq{t}$ satisfy Assumption \ref{pr-ass}. Then $\whp{t}\circ\whq{t}$ is absolutely continuous and for any
$t_1 , t_2$ in $(a,b)$

\be{pr-d}
\int_{t_1}^{t_2} \dop{t}( \whp{t}\circ\whq{t})\, dt= \int_{t_1}^{t_2}( \dop{t} \whp{t}\circ \whq{t}+ \whp{t} \circ \dop{t} \whq{t})\, dt.
\ee
\et
\begin{proof} Let us fix $t_1,t_2\in (a,b)$, $\varphi\in C^m(M,E)$ and $q \in M$ then
\be{pr-1}
\begin{split}
\int_{t_1}^{t_2} \frac{1}{h}(\whp{t+h}\circ\whq{t+h}-\whp{t}\circ\whq{t})(\varphi) (q) \, dt=\\= \frac{1}{h} \int_{t_2}^{t_2+h} \whp t \circ\whq{t}(\varphi) (q) \, dt -\frac{1}{h}\int_{t_1}^{t_1+h} \whp t\circ\whq{t}(\varphi) (q)  \, dt
\end{split}
\ee
Due to {\em(iii)} of Assumption \ref{pr-ass} we have
\be{pr-0}
\begin{split}
\int_{t_1}^{t_2} \frac{1}{h}(\whp{t+h}\circ\whq{t+h}-\whp{t}\circ\whq{t})   (\varphi)(q)\, dt=\\= \int_{t_1}^{t_2}dt
\frac{1}{h}\int_{t}^{t+h}\dop{\tau} \whp{\tau}\circ \whq{t}(\varphi)(q)\, d\tau
+\int_{t_1}^{t_2}dt \frac{1}{h}\int_{t}^{t+h}\whp{t+h}\circ \dop{\tau} \whq{\tau} (\varphi)(q) d\tau
\end{split}
\ee

By using Fubini theorem, we obtain  the following
\begin{lemma}\label{ft}
Let $g: (a,b)\times (a,b)\to E$ be integrable function then for any $t_1,t_2\in (a,b)$ and sufficiently small $h$
\be{ft-f}
\begin{split}
\int_{t_1}^{t_2} dt\int_{t}^{t+h} g(t,\tau) \,d\tau=\int_{t_1}^{t_2} d\tau\int_{\tau-h}^{\tau} g(t,\tau) \,dt-\\-\int_{t_1}^{t_1+h}d\tau\int_{\tau-h}^{t_1} g(t,\tau)\, d\tau+\int_{t_2}^{t_2+h}d\tau \int_{\tau-h}^{t_2} g(t,\tau)\,dt
\end{split}
\ee
\end{lemma}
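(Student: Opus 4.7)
The plan is to apply Fubini's theorem to swap the order of integration on the left-hand side of \eqref{ft-f}. The integrability hypothesis on $g$ over the rectangle containing the domain of integration is precisely what makes this step legitimate, so the argument will reduce to careful bookkeeping of the region's boundary rather than any hard analytic estimate.

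First I would identify the domain of integration as the parallelogram $R = \{(t,\tau) : t_1 \le t \le t_2,\ t \le \tau \le t+h\}$, a strip of width $h$ sitting above the diagonal $\tau = t$. For this set to lie inside $(a,b)\times(a,b)$ and for the description below to be clean, I would interpret the ``sufficiently small $h$'' hypothesis as $0 < h \le t_2 - t_1$ together with $[t_1-h, t_2+h] \subset (a,b)$, and state this explicitly at the outset.

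Next I would describe $R$ by its $\tau$-sections, which split into three pieces: for $\tau \in [t_1, t_1+h]$ the slice is $t \in [t_1, \tau]$; for $\tau \in [t_1+h, t_2]$ the slice is $t \in [\tau-h, \tau]$; for $\tau \in [t_2, t_2+h]$ the slice is $t \in [\tau-h, t_2]$. Fubini then yields
\begin{equation*}
\int_{t_1}^{t_2} dt \int_t^{t+h} g(t,\tau)\, d\tau = \int_{t_1}^{t_1+h} d\tau \int_{t_1}^{\tau} g\, dt + \int_{t_1+h}^{t_2} d\tau \int_{\tau-h}^{\tau} g\, dt + \int_{t_2}^{t_2+h} d\tau \int_{\tau-h}^{t_2} g\, dt.
\end{equation*}

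To put this in the form claimed in \eqref{ft-f}, I would extend the middle integral so that $\tau$ runs from $t_1$ and subtract the correction. For $\tau \in [t_1, t_1+h]$ we have $\tau - h \le t_1 \le \tau$, so $\int_{\tau-h}^{\tau} g\, dt = \int_{\tau-h}^{t_1} g\, dt + \int_{t_1}^{\tau} g\, dt$; summing the first two terms above then rearranges as $\int_{t_1}^{t_2} d\tau \int_{\tau-h}^{\tau} g\, dt - \int_{t_1}^{t_1+h} d\tau \int_{\tau-h}^{t_1} g\, dt$, and together with the third term this is precisely \eqref{ft-f}. The only real obstacle is drawing the correct picture of $R$ and tracking the three $\tau$-slices where the parallelogram exits the strip $t_1 \le t \le t_2$; once that is in place the identity collapses to an elementary rearrangement.
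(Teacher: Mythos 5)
Your proof is correct and follows the same approach the paper indicates (the paper merely cites Fubini's theorem without spelling out the region bookkeeping, which is exactly what you supply). The $\tau$-slice decomposition of the strip $\{t_1 \le t \le t_2,\ t \le \tau \le t+h\}$ into the three pieces and the subsequent split of the middle integral over $[t_1,t_1+h]$ at $t = t_1$ is exactly what is needed, and your explicit interpretation of ``sufficiently small $h$'' as $0 < h \le t_2-t_1$ with $[t_1-h,t_2+h]\subset(a,b)$ is the right reading.
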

We use this Lemma to evaluate the first term in the right-hand side of \rref{pr-0}
\[
\begin{split}
\int_{t_1}^{t_2}dt
\frac{1}{h}\int_{t}^{t+h}\dop{\tau} \whp{\tau}\circ \whq{t}(\varphi)(q)\, d\tau=\int_{t_1}^{t_2} d\tau \frac{1}{h}\int_{\tau-h}^{\tau}\dop{\tau} \whp{\tau}\circ \whq{t}(\varphi)(q) \,dt-\\-\frac{1}{h}\int_{t_1}^{t_1+h}d\tau\int_{\tau-h}^{t_1} \dop{\tau} \whp{\tau}\circ \whq{t}(\varphi)(q)\, d\tau+\frac{1}{h}\int_{t_2}^{t_2+h}d\tau \int_{\tau-h}^{t_2} \dop{\tau} \whp{\tau}\circ \whq{t}(\varphi)(q)\,dt
\end{split}
\]
It follows from conditions {\em (ii),(iv)} of Assumptions \ref{pr-ass}, from
Fubini theorem and Lebesgue convergence theorem that
\be{pr-lim1}
\lim_{h\to 0} \int_{t_1}^{t_2}dt
\frac{1}{h}\int_{t}^{t+h}\dop{\tau} \whp{\tau}\circ \whq{t}(\varphi)(q)\, d\tau=\int_{t_1}^{t_2}
\dop{\tau} \whp{\tau}\circ \whq{\tau}(\varphi)(q)\, d\tau
\ee
By similar argument we prove the following limit
\be{pr-lim2}
\lim_{h\to 0} \int_{t_1}^{t_2}dt
\frac{1}{h}\int_{t}^{t+h} \whp{t+h}\circ\dop{\tau} \whq{t}(\varphi)(q)\, d\tau=\int_{t_1}^{t_2}
 \whp{\tau}\circ \dop{\tau} \whq{\tau}(\varphi)(q)\, d\tau
\ee
By using these limits and continuity of $t\to \whp t \circ \whq t (\varphi)(q)$
(condition {\em (i)},
we derive from
 \rref{pr-1} and \rref{pr-0} that
\be{pr-2}
(\whp{t_2}\circ\whq{t_2}-\whp{t_1}\circ\whq{t_1})(\varphi)(q)=\int_{t_1}^{t_2} ( \whp{t}\circ  \dop{t} \whq{t}
+  \dop{t} \whp t  \circ \whq{t})\, dt (\varphi)(q)
\ee
This implies that $\whp{t}\circ   \whq{t}$ is absolutely continuous and its derivative satisfies the
product rule \rref{pr-d} in the sense of distributions.
\end{proof}

\subsection{Operators $\mathrm{Ad}$ and $\mathrm{ad}$.}

Let $V$ be a vector field and $F:M\to M$ be a $C^m$ diffeomorphism.
For a solution $q(t)$ of the equation $\displaystyle \dot q(t)=V(q(t))$ the function
$r(t)=F(q(t))$ is also a solution of the differential equation
\be{adde}
\dot r(t)=F_* (q(t))V(q(t))=F_*V(r(t))
\ee
where the vector field $F_*V$ is defined by $F_*V(r):= F_*(F^{-1}(r))V(F^{-1}(r))$.

To obtain the representation for the operator
$\what{F_* V}$ corresponding the vector field $F_* V$ we consider the diffeomorphism flow $R_t$ corresponding to the differential equation \rref{adde}. Then
\be{adde2}
\dop{t} \whr t=\whr{t}\circ \what{F_*V}
\ee
But $\whr t=  \whp t \circ \what{F}$ where $P_t$ is the diffeomorphism flow corresponding to the vector-field $V$. By using the product rule and \rref{adde2}
we get
\[
\dop t \whr t= \dop t \whp t \circ \what{F} =  \whp t \circ \whv{}\circ \what{F} =\whp t \circ \what{F} \circ \what{F_* V}
\]
This implies that
\be{ad-r}
\what{F_* V} = \what{F^{-1}}\circ \whv{} \circ \what{F}
\ee
 Following \cite{MR2062547} we define the
operator $\mathrm{Ad}\ \what{F} : \what{V} \mapsto \what{F} \circ \what{V}
 \circ \what{F^{-1}}$.

Recall that the Lie bracket $\left[V,W\right]$ of vector fields $V$ and $W$ is the vector field\footnote{To show that this is a vector-field we can use the relation \rref{com-d} for vector fields $V,W$.} whose operator representation has form  $\what{\left[V,W\right]} = \what{V} \circ \what{W} - \what{W} \circ\what{V}$.
Let us prove that the Lie bracket is invariant under diffeomorphism. We have
\begin{align*}
  \what{F_* \left[V,W\right]} & = \what{F^{-1}} \circ \left(\what{V} \circ \what{W} - \what{W} \circ \what{V} \right) \circ \what{F} \\
  & = \what{F^{-1}} \circ \what{V} \circ \what{F} \circ \what{F^{-1}} \circ \what{W} \circ \what{F} - \what{F^{-1}} \circ \what{W} \circ \what{F}
  \circ \what{F^{-1}} \circ \what{V}  \circ \what{F} \\
  & = \what{F_* V} \circ \what{F_* W} - \what{F_* W} \circ \what{F_* V}
  = \what{\left[F_* V,F_* W\right]}.
\end{align*}
Since the assignment $V \mapsto \what{V}$ is an injection, this proves  the vector field
equality $F_* \left[V,W\right] = \left[F_* V,F_* W\right]$.

It makes sense  (as in \cite{MR2062547})
  to define an operator $\mathrm{ad}\,\what{V}_t$ by
\be{ad2-d}
(\mathrm{ad}\,\what{V}_t)\circ \what{W}_t = \left[\what{V}_t,\what{W}\right].
\ee

Finally, let $v \in T_qM$ and $F : M \rightarrow M$ a diffeomorphism of class $C^m$. Then $F_* (q) v$ is a tangent vector in $T_{F(q)}M$ and it is natural
to ask for $\what{F_*(q) v}$ in terms of $\what{v}$ and $\what{F}$. We claim that, as in \cite{MR2062547}, one obtains
\be{pv}
\what{F_*(q) v} = \what{v} \circ \what{F}.
\ee
 To see this,
let $\varphi \in C^m(M,E)$. Then, applying the chain rule we have
$
  \what{F_* v}(\varphi) = \varphi_*(F(q)) F_*(q) v = \left(\varphi \circ F\right)_*(q) v
  = \what{v}\left(\varphi \circ F\right) = \what{v} \circ \what{F}(\varphi)
$.

\section{Operator Differential Equations and Their Applications}

In this section we further develop our extension in the direction of applications to flows of vector fields.

\subsection{Differential and integral operator equations}

Earlier, following \cite{MR524203,MR579930,MR2062547},    we have introduced the  operator differential equation
\be{ode1}
\dfrac{d }{dt} \what{P}_{t_0,t} = \what{P}_{t_0,t} \circ \what{V}_t, \quad \what{P}_{t_0,t_0}=\what{Id}
\ee
which has unique solution  $\what{P}_{t_0,t}$  representing
flow of diffeomorphisms for a nonautonomous vector field $V_t$
which is continuous in $t$.

In more general case of measurable in $t$ vector-field $V_t$   we have that  $\what{P}_{t_0,t}$ satisfies the integral operator equation
\be{oie1}
\what{P}_{t_0,t} = \what{Id}_M+ \displaystyle \int_{t_0}^t \what{P}_{t_0,\tau} \circ \what{V}_\tau \, d \tau
\ee
and it is the unique  absolutely continuous solution of this equation or the solution of the differential equation \rref{ode1} in sense of distributions.
The justification of this fact is based on the relation of $\what{P}_{t_0,t}$ to the Carath\'eodory solutions of \emph{ordinary} differential  equation \rref{mde}.

Now we consider the  differential operator equation
\be{ode2}
\dfrac{d}{dt} \what{Q}_{t_0,t}  =
- \what{V}_t \circ \what{Q}_{t_0,t}, \quad \what{Q}_{t_0,t_0}=\what{Id}_M.
\ee
Note that  this operator equation, even in the case $M=\Rb^n$, is related to some first-order
linear \emph{partial} differential equation.

The following result states that for
 a  locally integrable bounded $C^m$ vector field $V_t$  there exists a solution $\what{Q}_{t_0,t}$  of   \rref{ode2}
in the  sense of distributions. Moreover we have a representation of
$\what{Q}_{t_0,t}$  in terms of a solution of the equation of the type
\rref{oie1}.

 \begin{proposition}\label{inv-p}
 Let $V_t$ be a  locally integrable bounded $C^m$ vector field. Then absolutely
 continuous  operator-valued solutions  $\what{P}_{t_0,t}$ and
 $\what{Q}_{t_0,t}$ of differential equations \rref{ode1}
 and \rref{ode2} exist, are unique and
 \be{inv1}
 \what{Q}_{t_0,t}=(\what{P}_{t_0,t})^{-1}
 \ee
 \end{proposition}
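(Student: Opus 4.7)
The existence and uniqueness of the absolutely continuous operator solution $\what{P}_{t_0,t}$ of \rref{ode1} follows from the preceding discussion, via its correspondence with Carath\'eodory solutions of \rref{mde} on $M$, so the work lies almost entirely with $\what{Q}_{t_0,t}$. The plan is to take $\what{Q}_{t_0,t} := \what{P}_{t,t_0}$, the operator attached to the inverse diffeomorphism $P_{t,t_0} = P_{t_0,t}^{-1}$. Since the assignment $F \mapsto \what{F}$ is contravariant, the identities $P_{t_0,t} \circ P_{t,t_0} = \mathrm{Id}_M = P_{t,t_0} \circ P_{t_0,t}$ immediately give $\what{P}_{t_0,t} \circ \what{Q}_{t_0,t} = \what{Q}_{t_0,t} \circ \what{P}_{t_0,t} = \what{Id}_M$, which establishes \rref{inv1} as soon as $\what{Q}_{t_0,t}$ is shown to be an absolutely continuous solution of \rref{ode2}.

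To verify this, I would derive the ODE for $t \mapsto P_{t,t_0}(q)$ in a local chart by implicit differentiation of $P_{t_0,t}(P_{t,t_0}(q)) = q$. Combining the almost-everywhere identity $\partial_t P_{t_0,t}(x) = V_t(P_{t_0,t}(x))$ with the chain-rule consequence $(P_{t_0,t})_*(P_{t,t_0}(q)) \cdot (P_{t,t_0})_*(q) = \mathrm{id}$, one obtains
\[
\dop{t} P_{t,t_0}(q) = -(P_{t,t_0})_*(q)\, V_t(q)
\]
for almost all $t$. The right-hand side is locally $L^1$ in $t$ by Definition \ref{defn:locally-integrably-bounded-manifold} together with continuity of $(P_{t,t_0})_*$, so $t \mapsto P_{t,t_0}(q)$ is absolutely continuous. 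The chain rule applied to $\what{Q}_{t_0,t}(\varphi)(q) = \varphi(P_{t,t_0}(q))$ then gives
\[
\dop{t}\what{Q}_{t_0,t}(\varphi)(q) = -\varphi_*(P_{t,t_0}(q))\, (P_{t,t_0})_*(q)\, V_t(q) = -\bigl(\what{V}_t \circ \what{Q}_{t_0,t}\bigr)(\varphi)(q),
\]
which is \rref{ode2} in the sense of distributions.

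For uniqueness, let $\what{R}_{t_0,t}$ be any absolutely continuous solution of \rref{ode2}. Since $V_t$ is locally integrably bounded and $\what{P}_{t_0,t}$, $\what{R}_{t_0,t}$ solve \rref{dem} and \rref{dem2} (with $W_t = -V_t$), Assumption \ref{pr-ass} is satisfied as noted after its statement, and Theorem \ref{pr-th} yields
\[
\dop{t}\bigl(\what{P}_{t_0,t} \circ \what{R}_{t_0,t}\bigr) = \what{P}_{t_0,t} \circ \what{V}_t \circ \what{R}_{t_0,t} - \what{P}_{t_0,t} \circ \what{V}_t \circ \what{R}_{t_0,t} = 0
\]
in the distribution sense. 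Since the product equals $\what{Id}_M$ at $t = t_0$ and is absolutely continuous with vanishing distributional derivative, $\what{P}_{t_0,t} \circ \what{R}_{t_0,t} \equiv \what{Id}_M$; composing on the left with $\what{Q}_{t_0,t}$ gives $\what{R}_{t_0,t} = \what{Q}_{t_0,t}$.

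The main obstacle is the implicit-differentiation step when $V_t$ is only measurable in $t$: one must verify that the a.e.\ identity for $\partial_t P_{t_0,t}(x)$ and the $C^m$ smoothness of $P_{t_0,t}$ in $x$ combine to produce, for each fixed $q$, an absolutely continuous curve $t \mapsto P_{t,t_0}(q)$ satisfying the explicit backward ODE above almost everywhere. This amounts to a careful bookkeeping of measurable-in-$t$ versus smooth-in-$x$ dependence, but once it is in hand, the remaining steps reduce to the chain rule and a single application of Theorem \ref{pr-th}.
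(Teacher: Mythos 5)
Your overall architecture is the same as the paper's: set $\what{Q}_{t_0,t} := \what{P}_{t,t_0}$, show it is an absolutely continuous solution of \rref{ode2}, then use Theorem~\ref{pr-th} to show any AC solution $\what{R}_{t_0,t}$ satisfies $\what{P}_{t_0,t}\circ\what{R}_{t_0,t}=\what{Id}_M$, forcing $\what{R}_{t_0,t}=\what{P}_{t,t_0}$. Your uniqueness argument is identical to the paper's, line for line. Where you diverge is the \emph{existence} step. The paper establishes that $\what{P}_{t,t_0}$ solves \rref{ode2} by applying the distributional product rule directly to the constant operator function $\what{P}_{t_0,t}\circ\what{Q}_{t_0,t}=\what{Id}_M$: since its derivative is zero, the integrand $\dop{\tau}\what{P}_{t_0,\tau}\circ\what{Q}_{t_0,\tau}+\what{P}_{t_0,\tau}\circ\dop{\tau}\what{Q}_{t_0,\tau}$ vanishes a.e., and substituting $\dop{\tau}\what{P}_{t_0,\tau}=\what{P}_{t_0,\tau}\circ\what{V}_\tau$ and composing with $\what{P}_{t_0,\tau}^{-1}$ yields \rref{ode2}. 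You instead compute the explicit pointwise derivative $\dop{t}P_{t,t_0}(q)=-(P_{t,t_0})_*(q)\,V_t(q)$ by implicit differentiation of $P_{t_0,t}(P_{t,t_0}(q))=q$, and then apply the chain rule to $\what{Q}_{t_0,t}(\varphi)(q)=\varphi(P_{t,t_0}(q))$. This is more concrete and has the advantage of directly producing an $L^1$ bound on the derivative, hence the absolute continuity of $\what{Q}_{t_0,t}$ --- a fact the paper asserts without argument but which is nevertheless needed to invoke Theorem~\ref{pr-th} (Assumption~\ref{pr-ass} presupposes that $\dop{\tau}\what{Q}_{t_0,\tau}$ exists and is locally integrable). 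On the other hand, the paper's route is purely operator-algebraic and avoids the implicit-differentiation bookkeeping you flag at the end. Both proofs leave essentially the same measurable-in-$t$ regularity verification implicit; yours is simply more explicit about where that verification lives.
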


\bpr Let $P_{t_0,t}$ be the flow of $V_t$, so that \rref{oie1} holds.
It is enough to prove the existence and uniqueness of \rref{ode2}.

Denote flow of diffeomorphisms $Q_{t_0,t}:=P_{t,t_0}$ then the operator-valued function $t\to \what{Q}_{t_0,t}$ is absolutely continuous and together with
$\what{P}_{t_0,t}$ satisfies Assumption \rref{pr-ass} for the product rule (Theorem
\rref{pr-th}).

Fix $\varphi\in C^m(M,E)$ and $q_0 \in M$. Then there exists an interval $(a,b)$
such that $\whp{t_0,t}(q)$ exists for any $t_0,t$ in $ (a,b)$ and any $q$
in some neighborhood of $q_0$. Due to the product rule we have for any $t\in(a,b)$
\[
\int_{t_0}^t  (\dop{\tau} \whp{t_0,\tau}\circ \whq{t_0,\tau}+ \whp{t_0,\tau} \circ \dop{\tau} \whq{t_0,\tau})\, dt (\varphi)(q_0)=0
\]
This implies that for a.a. $t\in (a,b)$
\[
( \whp{t_0,t}\circ \what{V}_t \circ \whq{t_0,t}+ \whp{t_0,t} \circ \dop{\tau} \whq{t_0,t}) (\varphi)(q_0)=0
\]
and $\what{Q}_{t_0,t}$ satisfies \rref{ode2} in the sense of distributions.

 To prove uniqueness of  such solution $\what{Q}_{t_0,t}$ we use  the product rule  \rref{pr-d}
\begin{align*}
  &  \ \what{P}_{t_0,t}\circ \what{Q}_{t_0,t} - \what{Id}_M = \int_{t_0}^t \frac{d}{d \tau} \left(  \what{P}_{t_0,\tau}\circ \what{Q}_{t_0,\tau}\right ) \, d \tau \\
  & = \int_{t_0}^t \left(\what{P}_{t_0,\tau} \circ \what{V}_\tau \circ \what{Q}_{t_0,\tau} - \what{P}_{t_0,\tau} \circ  \what{V}_\tau \circ \what{Q}_{t_0,\tau} \right) \, d \tau =0
\end{align*}
As a consequence, we have $\what{P}_{t_0,t} \circ \what{Q}_{t_0,t} = \what{Id}_M$ for all $t$, hence $\what{Q}_{t_0,t} = \what{P}_{t,t_0}$ which proves also
\rref{inv1}.
\epr

 \begin{proposition}\label{inv-p2}
 Let  $V_t$ be locally integrable bounded $C^m$ smooth vector field and $\whp{t_0,t}$ be an absolutely continuous solution of \rref{oie1}.
Then for any $C^m$ smooth vector field $W$ the operator-valued function
$t\to \mathrm{Ad}\, \what{P}_{t_0,t}\circ \what{W}$ is  absolutely continuous
 and satisfies the following
 equation in the sense of distributions
\be{ad-e}
\frac{d}{dt} \mathrm{Ad}\, \what{P}_{t_0,t}\circ \what{W} =
\mathrm{Ad}\, \what{P}_{t_0,t} \circ \mathrm{ad}\,\what{V}_t \circ \what{W}
\ee

 \end{proposition}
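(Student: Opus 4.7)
The plan is to rewrite the operator $\mathrm{Ad}\,\whp{t_0,t}\circ\what{W}$ as the threefold composition $\whp{t_0,t}\circ\what{W}\circ\whq{t_0,t}$, where $\whq{t_0,t}=(\whp{t_0,t})^{-1}$ by Proposition \ref{inv-p}, and then differentiate this product in the sense of distributions via two successive applications of the results of the previous subsection.

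For the first step, I apply Remark \ref{acvf}: since $\whp{t_0,t}$ is absolutely continuous and $W$ is a $C^m$ vector field, $t\mapsto\whp{t_0,t}\circ\what{W}$ is absolutely continuous with
\[
\dop{t}(\whp{t_0,t}\circ\what{W})=\dop{t}\whp{t_0,t}\circ\what{W}=\whp{t_0,t}\circ\whv{t}\circ\what{W},
\]
where I have used the distributional form of \rref{ode1}. For the second step, I apply the product rule Theorem \ref{pr-th} to the pair of absolutely continuous operator-valued functions $\whp{t_0,t}\circ\what{W}$ and $\whq{t_0,t}$; combining the derivative just computed with $\dop{t}\whq{t_0,t}=-\whv{t}\circ\whq{t_0,t}$ from Proposition \ref{inv-p} gives
\begin{align*}
\dop{t}(\whp{t_0,t}\circ\what{W}\circ\whq{t_0,t})
&=\whp{t_0,t}\circ\whv{t}\circ\what{W}\circ\whq{t_0,t}-\whp{t_0,t}\circ\what{W}\circ\whv{t}\circ\whq{t_0,t} \\
&=\whp{t_0,t}\circ[\whv{t},\what{W}]\circ\whq{t_0,t} \\
&=\mathrm{Ad}\,\whp{t_0,t}\circ\mathrm{ad}\,\whv{t}\circ\what{W},
\end{align*}
which is the asserted equation \rref{ad-e} in the sense of distributions, together with the absolute continuity of the left-hand side.

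The main point requiring care is the verification of Assumption \ref{pr-ass} for the pair $(\whp{t_0,t}\circ\what{W},\whq{t_0,t})$. However, $\whp{t_0,t}\circ\what{W}$ is exactly of the type described in Remark \ref{acvf}, with $\wha{t}=\whp{t_0,t}$ a solution of \rref{ode1}, while $\whq{t_0,t}$ satisfies the operator equation \rref{ode2}, which is of the form \rref{dem2} with underlying vector field $-\whv{t}$. Since $V_t$ is locally integrable bounded, the remark immediately following Assumption \ref{pr-ass} guarantees that conditions \emph{(i)--(iv)} hold, so Theorem \ref{pr-th} applies. The only other minor bookkeeping is to respect domains: $\what{W}:C^m(M,E)\to C^{m-1}(M,E)$, and $\whp{t_0,t}$, $\whq{t_0,t}$, arising from $C^m$-diffeomorphisms, act on $C^r(M,E)$ for every $0\le r\le m$, so the threefold composition is a well-defined map $C^m(M,E)\to C^{m-1}(M,E)$ and every intermediate composition above makes sense.
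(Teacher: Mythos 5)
Your proof is correct and follows essentially the same route as the paper: rewrite $\mathrm{Ad}\,\whp{t_0,t}\circ\what{W}$ as the composition $\whp{t_0,t}\circ\what{W}\circ(\whp{t_0,t})^{-1}$, invoke Proposition~\ref{inv-p} to identify $(\whp{t_0,t})^{-1}$ with the solution of \rref{ode2}, and apply the distributional product rule. You are somewhat more explicit than the paper in flagging Remark~\ref{acvf} for the factor $\whp{t_0,t}\circ\what{W}$ and in checking Assumption~\ref{pr-ass}, but the underlying argument is the same.
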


\bpr
Note that $\whp{t_0,t}^{-1}$ exists and due to   the assertion of Proposition \ref{inv-p} satisfies the differential equation \rref{ode2}. Then for any smooth vector field $W$
\begin{align*}
 \mathrm{Ad}\, \what{P}_{t_0,t}\circ\what{W}  =\what{W}+ \int_{t_0}^t \frac{d}{d\tau} \left(\what{P}_{t_0,\tau} \circ\what{ W} \circ \left(\what{P}_{t_0,\tau} \right)^{-1} \right) \, d\tau \\
   =
\what{Id}_M+ \int_{t_0}^t ( \what{P}_{t_0,\tau}\circ \what{V}_\tau \circ \what{W} \circ \left(\what{P}_{t_0,\tau} \right)^{-1} - \what{P}_{t_0,\tau} \circ \what{W} \circ \what{V}_\tau \circ \what{P}_{t_0,\tau}^{-1}) \, d\tau  \\
   = \what{Id}_M+ \int_{t_0}^t\left(\mathrm{Ad}\, \what{P}_t \right) \circ \what{\left[V_t,W\right]}\, d\tau.
\end{align*}
Recall the definition \rref{ad2-d} of the operator $\ad \what{V_t}$ to conclude
the proof.
\epr

   Method of variation of parameters can also be easy applied to the operator differential equation
\be{ode3}
\dfrac{d }{dt} \what{S}_{t_0,t} = \what{S}_{t_0,t} \circ (\what{V}_t
+\what{W}_t), \quad \what{S}_{t_0,t_0}=\what{Id}
\ee
Namely, we  have the following
\begin{proposition}\label{inv-p3}
 Let  $V_t$, $W_t$ be locally integrable bounded $C^m$ smooth vector fields.
Then
a solution of \rref{ode3} can be represented in the form
\be{mvp}
\what{S}_{t_0,t}=\what{C}_{t_0,t}\circ \what{P}_{t_0,t}
\ee
where  $\what{P}_{t_0,t}$ is the solution of the differential equation
\rref{ode1} and $\what{C}_{t_0,t}$ is a solution of the differential equation
\be{cf}
\dfrac{d }{dt} \what{C}_{t_0,t}= \what{C}_{t_0,t}\circ \mathrm{Ad}\, \what{P}_{t_0,t} \circ \what{W}_t, \quad \what{C}_{t_0,t_0}=\what{Id}_M
\ee
 \end{proposition}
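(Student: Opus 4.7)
The plan is to \emph{define} $\what{S}_{t_0,t} := \what{C}_{t_0,t} \circ \what{P}_{t_0,t}$ by the formula \rref{mvp} and then verify directly that this composition satisfies \rref{ode3} in the sense of distributions. The verification reduces to a single application of the product rule (Theorem \ref{pr-th}) combined with the algebraic identity $\mathrm{Ad}\, \what{P}_{t_0,t} \circ \what{W}_t = \what{P}_{t_0,t} \circ \what{W}_t \circ \what{P}_{t_0,t}^{-1}$ which is built into the definition of $\mathrm{Ad}$.

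Before invoking the product rule I first have to guarantee that \rref{cf} admits an absolutely continuous solution. The key observation is that \rref{cf} has exactly the form of \rref{ode1} with the operator $\mathrm{Ad}\,\what{P}_{t_0,t} \circ \what{W}_t$ playing the role of $\what{V}_t$. By the identity \rref{ad-r}, this operator equals $\what{\tilde W_t}$ where $\tilde W_t := (P_{t_0,t}^{-1})_* W_t$. Since $W_t$ is locally integrable bounded $C^m$ and $P_{t_0,t}$ is an absolutely continuous family of $C^m$ diffeomorphisms (Proposition \ref{inv-p}), the pushed-forward vector field $\tilde W_t$ is again locally integrable bounded $C^m$, so \rref{cf} admits a unique absolutely continuous solution $\what{C}_{t_0,t}$, namely the flow operator of $\tilde W_t$.

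With existence secured, both $\what{P}_{t_0,t}$ and $\what{C}_{t_0,t}$ are absolutely continuous flow operators of locally integrable bounded vector fields, so the pair satisfies Assumption \ref{pr-ass} by the remark following that assumption. Theorem \ref{pr-th} then yields, in the sense of distributions,
\[
\dop{t}\what{S}_{t_0,t} = \dop{t}\what{C}_{t_0,t} \circ \what{P}_{t_0,t} + \what{C}_{t_0,t} \circ \dop{t}\what{P}_{t_0,t}.
\]
Substituting \rref{cf} and \rref{ode1} and simplifying the first summand via $\mathrm{Ad}\, \what{P}_{t_0,t} \circ \what{W}_t \circ \what{P}_{t_0,t} = \what{P}_{t_0,t} \circ \what{W}_t \circ \what{P}_{t_0,t}^{-1} \circ \what{P}_{t_0,t} = \what{P}_{t_0,t} \circ \what{W}_t$ produces
\[
\dop{t}\what{S}_{t_0,t} = \what{C}_{t_0,t} \circ \what{P}_{t_0,t} \circ \what{W}_t + \what{C}_{t_0,t} \circ \what{P}_{t_0,t} \circ \what{V}_t = \what{S}_{t_0,t} \circ (\what{V}_t + \what{W}_t),
\]
which is \rref{ode3}. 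The initial condition is immediate since $\what{C}_{t_0,t_0} = \what{P}_{t_0,t_0} = \what{Id}_M$.

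I expect the most delicate step to be not the calculation above, but the passage from local integrable boundedness of $W_t$ to that of $\tilde W_t$ in charts, which is what legitimizes both the existence of $\what{C}_{t_0,t}$ and the applicability of the product rule. Once this transfer of regularity is handled, the remainder of the argument is algebraic bookkeeping with the definitions of $\mathrm{Ad}$ and the composition law for the operators $\what{\cdot}$.
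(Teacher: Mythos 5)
Your proof is correct but runs in the opposite direction from the paper's.  The paper \emph{defines} $\what{C}_{t_0,t} := \what{S}_{t_0,t} \circ \what{P}_{t_0,t}^{-1}$, where $\what{S}_{t_0,t}$ is the (already known to exist) flow of $V_t + W_t$, and then applies the product rule to the pair $(\what{S}_{t_0,t}, \what{P}_{t_0,t}^{-1})$ --- both of which fall squarely under the remark following Assumption~\ref{pr-ass}, being flow operators of type \rref{dem} and \rref{dem2} respectively --- to derive \rref{cf}.  This gets existence of $\what{C}_{t_0,t}$ for free.  You instead start from a hypothetical solution $\what{C}_{t_0,t}$ of \rref{cf}, define $\what{S}_{t_0,t} := \what{C}_{t_0,t} \circ \what{P}_{t_0,t}$, and verify that it satisfies \rref{ode3}.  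Your algebra is right, in particular the key cancellation $\mathrm{Ad}\,\what{P}_{t_0,t}\circ\what{W}_t\circ\what{P}_{t_0,t} = \what{P}_{t_0,t}\circ\what{W}_t$ and the identification $\mathrm{Ad}\,\what{P}_{t_0,t}\circ\what{W}_t = \what{(P_{t_0,t}^{-1})_* W_t}$ via \rref{ad-r}.  The cost of your direction is exactly the point you flag: you must first establish that the pushed-forward field $\tilde W_t := (P_{t_0,t}^{-1})_* W_t$ is again locally integrable bounded $C^m$ so that \rref{cf} has an absolutely continuous flow solution and the product rule applies; you sketch this but do not carry it out, and the chain-rule bookkeeping for the derivatives of $(P_{t_0,t}^{-1})_* W_t$ in a chart is the only nontrivial gap. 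The paper's direction is shorter precisely because it sidesteps that regularity transfer entirely.  Your approach buys a constructive recipe for $\what{S}$ in terms of two flows you build first, which is the way one actually uses variation of parameters, so it is arguably the more natural exposition once the regularity transfer is written out.
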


\begin{proof} It follows from \rref{mvp} and Proposition \ref{inv-p} that
$\what{C}_{t_0,t}$ is absolutely continuous and by the product rule
\[
\begin{split}
\what{C}_{t_0,t}- \what{Id}_M=\int_{t_0}^t (
\dfrac{d }{d\tau}\what{S}_{t_0,\tau}\circ \what{P}^{-1}_{t_0,\tau}+\what{S}_{t_0,\tau}\circ  \dfrac{d }{d\tau} \what{P}^{-1}_{t_0,\tau}) \ d\tau=\\
 \int_{t_0}^t (
\what{C}_{t_0,\tau}\circ  \what{P}_{t_0,\tau}\circ (\what{V}_\tau+W_\tau)\circ \what{P}^{-1}_{t_0,\tau} - \what{C}_{t_0,\tau}\circ  \what{P}_{t_0,\tau}
\circ \what{V}_\tau\circ\what{P}^{-1}_{t_0,\tau})\ d\tau=\\
\int_{t_0}^t
\what{C}_{t_0,\tau}\circ  \what{P}_{t_0,\tau}\circ W_\tau\circ \what{P}^{-1}_{t_0,\tau}
\ d\tau  =  \int_{t_0}^t
\what{C}_{t_0,\tau}\circ \mathrm{Ad}\, \what{P}_{t_0,\tau}\circ \what{W}_\tau\ d\tau
\end{split}
\]
which proves \rref{cf}.
\end{proof}

\subsection{Derivatives of Flows with Respect to a Parameter}

Consider a family of nonautonomous $C^m$ vector field $V_t^\alpha$ which depends upon scalar
parameter $\alpha$ and corresponding flow $P^\alpha_{t_0,t}$,
Let us assume that  $V_t^\alpha$
is differentiable in $\alpha $ in the following sense
\[
V_t^\alpha = V_t + \alpha W_t + o_t(\alpha),
\]
where $V_t,W_t$ are nonautonomous $C^m$ vector fields which are locally bounded
(see Definition \ref{defn:locally-bounded-manifold}).

Let $\whp{t_0,t}$ and $\whq{t_0,t}$ be absolutely continuous solutions of \rref{ode1} and \rref{ode2}. We assume
that the  operator  $\what{o_t}(\alpha)$ is similar to the operator "little \emph{ o}" in \rref{os-d}
and satisfies
the following conditions
\[
\what{P}_{t_0,t} \circ \what{o_t}(\alpha) \circ \what{Q}_{t_0,t} = \what{o}(\alpha), \quad
\what{V}_t \circ \what{o_t}(\alpha) \circ \what{W}_t = \what{o}(\alpha)
\]
uniformly with respect to $t\in [t_0,t_1]$.

W use these assumptions  and  \eqref{cf} from Proposition \ref{inv-p3}  to obtain $\what{P}_{t_0,t}^\alpha = \what{C}_{t_0,t}^\alpha \circ \what{P}_{t_0,t}$, where $\what{C}_{t_0,t}^\alpha = \what{Id}_M + \alpha \int_{t_0}^t \mathrm{Ad} \, \what{P}_{t_0,\tau} \circ \what{W}_\tau\, d\tau + \what{o}(\alpha)$.
Hence $\what{P}_{t_0,t}^\alpha = \what{P}_{t_0,t} + \alpha \int_{t_0}^t \mathrm{Ad} \, \what{P}_{t_0,\tau} \circ \what{W}_\tau \, d\tau \circ \what{P}_{t_0.t} + \what{o}(\alpha)$.

This implies that  $\what{P}^\alpha_{t_0,t}$ is differentiable at $\alpha=0$
and
\begin{equation}
\label{eq:flow-der-first}
{\dfrac{\partial }{\partial \alpha}}\what{P}_{t_0,t}^\alpha = \int_{t_0}^t  \mathrm{Ad} \, \what{P}_{t_0,\tau} \circ \what{W}_\tau \, d\tau \circ \what{P}_{t_0,t} .
\end{equation}

This same formula is given in the context of the classical Chronological Calculus \cite{MR2062547}. We invite the reader to check that the second representation found in \cite{MR2062547}, given below
\begin{equation}
  \label{eq:flow-der-second}
  {\dfrac{\partial}{\partial \alpha}}\what{P}_{t_0,t}^\alpha = \what{P}_{t_0,t} \circ \int_{t_0}^t \mathrm{Ad} \what{P}_{t,\tau} \circ \what{W}_\tau \, d\tau
\end{equation}
is easily obtained from the first.

Operator formulas \rref{eq:flow-der-first} and \rref{eq:flow-der-second} can be used in order to obtain the following representations for derivative of the flow $P^\alpha_{t_0,t}$ at $\alpha=0$
\begin{equation}
\label{eq:out}
    \frac{\partial}{\partial \alpha} P_{t_0,t}^\alpha(q) = P_{t_0,t \, *} (q)\int_{t_0}^t P_{\tau, t_0 \, *}(P_{t_0, \tau}(q)) W_\tau(P_{t_0, \tau})(q) \, d \tau,
\end{equation}
\begin{equation}
\label{eq:in}
\frac{\partial}{\partial \alpha} P_{t_0,t}^\alpha(q) = \int_{t_0}^t P_{\tau, t \, *}(P_{t_0, \tau}(q)) W_\tau (P_{t_0, \tau}(q)) \, d \tau.
\end{equation}
Here we prove \rref{eq:in}, the proof of \rref{eq:out} is similar.
  By using \rref{eq:flow-der-second} and \rref{pv},  we obtain for any $\varphi\in C^M(M,E)$, the following obvious relations
\[
\begin{split}
\varphi_*(P_{t_0,t}(q))\frac{\partial }{\partial \alpha}P_{t_0,t}^\alpha  (q)=\what{q} \circ \dfrac{\partial }{\partial \alpha}\what{P}_{t_0,t}^\alpha (\varphi)=
\int_{t_0}^t \what{q}\circ \what{{P}}_{t_0,t} \circ \mathrm{Ad} \what{P}_{t,\tau} \circ \what{W}_\tau \, d \tau (\varphi)=\\
 \int_{t_0}^t \what{{P}_{t_0,t}(q)} \circ \what{P_{\tau, t \, *} W_\tau}(\varphi) \, d \tau= \int_{t_0}^t \varphi_*({P}_{t_0,t}(q)) {P_{\tau, t \, *}
({P}_{t_0,\tau}(q)) W_\tau}({P}_{t_0,\tau}(q)) \, d \tau
\end{split}
\]
These relations imply \rref{eq:in}.

\subsection{Finite sums of Volterra series and a remainder term estimate}

\comment{Let $V_t$ be a nonautonomous $C^m$ vector field on $M$ and let $P_{t_0,t} : J_0 \times U_0 \rightarrow U$ be the local flow of
of this field. As described above, we have $
  \what{P}_{t_0,t} = \what{Id}_M + \displaystyle \int_{t_0}^t \what{P}_{t_0,\tau}\circ \what{V}_\tau \, d \tau$.
Replacing $\what{P}_{t_0,\tau}$ with its integral form, we obtain
\begin{equation*}
  \what{P}_{t_0,t} = \what{Id}_M + \int_{t_0}^t \what{V}_\tau \, d \tau
  + \int_{t_0}^t \int_{t_0}^\tau \what{P}_{t_0,\sigma} \circ \what{V}_\sigma \circ \what{V}_\tau \, d \sigma d \tau.
\end{equation*}
Provided that natural $p \le m+1$, we may continue to obtain
\begin{equation}
\label{eq:volterra}
  \what{P}_{t_0,t} = \what{Id}_M + \sum_{i = 1}^{p-1} \int_{\Delta_i(t)}  \what{V}_{\tau_i} \circ \dots \circ
  \what{V}_{\tau_1} \, d \tau_i \dots d \tau_1 + \what{R}_p(t)
\end{equation}
where
\be{rt-d}
\what{R}_p(t):= \int_{\Delta_p(t)} \what{P}_{t_0,\tau_p} \circ \what{V}_{\tau_p} \circ \dots \circ
  \what{V}_{\tau_1} \, d \tau_p \dots d \tau_1,
\ee
is the remainder term and $\Delta_p(t)$ is the simplex $\left\{t_0 \le \tau_p \le \tau_{p-1} \le \dots \le \tau_1 \le t\right\}$.

Now consider the remainder term in \eqref{eq:volterra}. Fix $\varphi \in C^m(E,E)$. Choose $\delta$ small enough that all of the necessary
derivatives are bounded on $B(q_0;\delta)$. For $t$ sufficiently close to $t_0$, $P_{t_0,t}(q_0)$ will lie within $B(q_0;\delta)$. For
these values of $t$, we have
\begin{align*}
  & \left\| \int_{\Delta_r(t)} \what{P}_{t_0,\tau_r} \circ \what{V}_{\tau_r} \circ \dots \circ
  \what{V}_{\tau_1}(\varphi)(q_0) \, d \tau_r \dots d \tau_1 \right\| \\
  & \hspace{2cm} \le  \int_{\Delta_r(t)}  \left\| \what{P}_{t_0,\tau_r} \circ \what{V}_{\tau_r} \circ \dots \circ
  \what{V}_{\tau_1}(\varphi)(q_0)\right\| \, d \tau_r \dots d \tau_1 \\
  & \hspace{2cm} =  \int_{\Delta_r(t)}  \left\| \what{V}_{\tau_r} \circ \dots \circ
  \what{V}_{\tau_1}(\varphi)(P_{t_0,\tau_r}(q_0))\right\| \, d \tau_r \dots d \tau_1 \\
  & \hspace{2cm}\le \int_{\Delta_r(t)} \alpha(\tau_r) \dots \alpha(\tau_1)d \tau_r \dots d \tau_1 \left(\max_{i = 1, \dots, r} \sup_{q \in B(q_0;\delta)} \left\|\varphi^{(i)}(q) \right\|\right)\\
  &\hspace{2cm} = \frac{1}{r!} \left( \int_{t_0}^t \alpha(\tau) \, d \tau \right)^r \left(\max_{i = 1, \dots, r} \sup_{q \in B(q_0;\delta)} \left\|\varphi^{(i)}(q) \right\|\right).
\end{align*}
Therefore $\left\| \displaystyle \int_{\Delta_r(t)} \what{P}_{t_0,\tau_r} \circ \what{V}_{\tau_r} \circ \dots \circ
  \what{V}_{\tau_1}(\varphi)(q_0) \, d \tau_r \dots d \tau_1 \right\| = O(t^r)$ as $t \rightarrow 0$.
  In the case where $M$ is a general Banach manifold, we choose a coordinate neighborhood $U$ of $q_0$ and immediately obtain the same result
  in local coordinates. This proves
  \begin{proposition}
    Let $M$ be a $C^m$-manifold over a separable Banach space $E$ and let $V_t$ be a nonautonomous $C^m$-vector field on $M$. For any point $q_0 \in M$
    and any choice of local coordinates $\psi$ and any $\varphi \in C^m(M,E)$ at $q_0$, we have
    \begin{align*}
     & \left\| \what{P}_{t_0,t}(\varphi)(q_0) - \left( \what{Id}_M + \sum_{i = 1}^{r-1} \int_{\Delta_i(t)}  \what{V}_{\tau_i} \circ \dots \circ
  \what{V}_{\tau_1} \, d \tau_i \dots d \tau_1 \right)\left(\varphi\right)(q_0) \right\| \\
   & \hspace{7cm} \le \frac{1}{r!} \left(\int_{t_0}^t \alpha_{\psi,\varphi}(\tau) \, d\tau \right)^r = O(t^r).
    \end{align*}
    where $\alpha$ is an integrable function depending on $\varphi$ and $\psi$.
  \end{proposition}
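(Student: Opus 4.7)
The plan is to derive the Volterra expansion directly by repeated substitution of the integral form of $\what{P}_{t_0,t}$ into itself. Starting from $\what{P}_{t_0,t} = \what{Id}_M + \int_{t_0}^t \what{P}_{t_0,\tau}\circ \what{V}_\tau\,d\tau$ and substituting this same identity for $\what{P}_{t_0,\tau}$ inside the integral, after $r-1$ iterations one obtains the partial Volterra sum together with a single remainder
\[
\what{R}_r(t):=\int_{\Delta_r(t)}\what{P}_{t_0,\tau_r}\circ\what{V}_{\tau_r}\circ\dots\circ\what{V}_{\tau_1}\,d\tau_r\dots d\tau_1.
\]
The task then reduces to estimating $\|\what{R}_r(t)(\varphi)(q_0)\|$ by $\tfrac{1}{r!}(\int_{t_0}^t \alpha_{\psi,\varphi}(\tau)\,d\tau)^r$.

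Next, I would pass to the chart $\psi$ around $q_0$ and choose a closed ball $B(q_0;\delta)$ contained in its domain on which all derivatives of $\varphi$ up to order $r$ are uniformly bounded; then shrink the time interval so that the trajectory $\tau\mapsto P_{t_0,\tau}(q_0)$ remains inside $B(q_0;\delta)$. Using the identity $\what{P}_{t_0,\tau_r}(\psi)(q_0)=\psi(P_{t_0,\tau_r}(q_0))$, which is just the definition of $\what{P}$, the integrand of $\what{R}_r(t)$ takes the form $\what{V}_{\tau_r}\circ\dots\circ\what{V}_{\tau_1}(\varphi)$ evaluated at the point $P_{t_0,\tau_r}(q_0)\in B(q_0;\delta)$, so the whole estimate is local.

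The main technical ingredient is a pointwise bound for the iterated composition $\what{V}_{\tau_r}\circ\dots\circ\what{V}_{\tau_1}(\varphi)(q)$. Expanding via the chain and Leibniz rules in the coordinates furnished by $\psi$, this quantity is a polynomial expression in derivatives of the coordinate representatives $\varphi_*V_{\tau_i}$ of orders at most $r-1$, multiplied by a derivative of $\varphi$ of order at most $r$. By Definition \ref{defn:locally-integrably-bounded-manifold}, each such derivative of $V_{\tau_i}$ is majorised on $B(q_0;\delta)$ by an integrable function $k(\tau_i)$; collecting these majorants together with the uniform bound on derivatives of $\varphi$ on $B(q_0;\delta)$ into a single integrable function $\alpha_{\psi,\varphi}(\tau)$ yields
\[
\bigl\|\what{V}_{\tau_r}\circ\dots\circ\what{V}_{\tau_1}(\varphi)(q)\bigr\|\le\alpha_{\psi,\varphi}(\tau_r)\cdots\alpha_{\psi,\varphi}(\tau_1)
\]
for every $q\in B(q_0;\delta)$.

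Finally, integrating this product over the simplex and invoking the standard symmetry identity $\int_{\Delta_r(t)}\prod_i\alpha(\tau_i)\,d\tau_r\dots d\tau_1=\tfrac{1}{r!}\bigl(\int_{t_0}^t\alpha(\tau)\,d\tau\bigr)^r$ delivers the advertised bound, which is $O(t^r)$ because $\int_{t_0}^t\alpha(\tau)\,d\tau\to 0$ as $t\to t_0$. I expect the principal obstacle to be the pointwise estimate for the iterated composition: one must keep careful track of which derivatives of $\varphi$ and which derivatives of the $V_{\tau_i}$ appear (this is where the hypothesis $r-1\le m$ enters through the requirement that $\varphi\in C^m$ and $V_t$ be $C^m$-smooth in $q$), and verify that all the resulting terms are simultaneously controlled by a single integrable majorant $\alpha_{\psi,\varphi}$. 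Once that construction is in hand, the remainder of the argument is routine Fubini-type bookkeeping.
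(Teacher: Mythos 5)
Your proposal follows essentially the same route as the paper's own argument: derive the Volterra expansion by iterated substitution in $\what{P}_{t_0,t}=\what{Id}_M+\int_{t_0}^t\what{P}_{t_0,\tau}\circ\what{V}_\tau\,d\tau$, localize in a chart to a ball where the derivatives of $\varphi$ up to order $r$ are bounded and the trajectory stays inside, use $\what{P}_{t_0,\tau_r}(\eta)(q_0)=\eta(P_{t_0,\tau_r}(q_0))$ to rewrite the remainder integrand, bound the iterated composition $\what{V}_{\tau_r}\circ\cdots\circ\what{V}_{\tau_1}(\varphi)$ pointwise by a product $\alpha_{\psi,\varphi}(\tau_r)\cdots\alpha_{\psi,\varphi}(\tau_1)$ using the integrable majorants from the local integrable-boundedness assumption, and finish with the simplex-volume identity $\int_{\Delta_r(t)}\prod_i\alpha(\tau_i)=\frac{1}{r!}\bigl(\int_{t_0}^t\alpha\bigr)^r$. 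The paper does exactly this, separating the $\sup\|\varphi^{(i)}\|$ factor from the $\alpha(\tau_i)$ product and then absorbing it, whereas you build both into $\alpha_{\psi,\varphi}$ from the start; this is only a cosmetic difference.
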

}

Let $V_t$ be a nonautonomous $C^m$ vector field on $M$ and let $P_{t_0,t} : J_0 \times U_0 \rightarrow U$ be the local flow of
of this field. Consider the operator integral equation \rref{oie1}.
Replacing $\what{P}_{t_0,\tau}$ in \rref{oie1} with its integral form, we obtain
\begin{equation*}
  \what{P}_{t_0,t} = \what{Id}_M + \int_{t_0}^t \what{V}_\tau \, d \tau
  + \int_{t_0}^t \int_{t_0}^\tau \what{P}_{t_0,\sigma} \circ \what{V}_\sigma \circ \what{V}_\tau \, d \sigma d \tau.
\end{equation*}
Provided that $k \le m$, we may continue to obtain
\begin{equation}
\label{eq:volterra}
  \what{P}_{t_0,t} = \what{Id}_M + \sum_{i = 1}^{k-1} \int_{\Delta_i(t)}  \what{V}_{\tau_i} \circ \dots \circ
  \what{V}_{\tau_1} \, d \tau_i \dots d \tau_1 + \what{R}_k(t)
\end{equation}
where
\be{rt-d}
\what{R}_k(t):= \int_{\Delta_k(t)} \what{P}_{t_0,\tau_k} \circ \what{V}_{\tau_k} \circ \dots \circ
  \what{V}_{\tau_1} \, d \tau_k \dots d \tau_1,
\ee
is the remainder term and $\Delta_k(t)$ is the simplex $\left\{t_0 \le \tau_k \le \tau_{k-1} \le \dots \le \tau_1 \le t\right\}$.

Suppose that for any $\varphi \in C^m(M,E)$ and $q_0 \in M$ there is a neighborhood $U$ of $q_0$, $\delta>0$ and a constant $C$ such that for all $q \in U$, for any $t_0 \le \tau_k \le \dots \le \tau_1 \le t \le t_0 + \delta$, we have
\begin{equation}
\label{eq:bounded-remainder}
\left\|\what{V}_{\tau_k} \circ \dots \circ
  \what{V}_{\tau_1}(\varphi)(q) \right\|_E \le C
  \end{equation}
This is true, for example, when $V_t$ is locally bounded or autonomous.
Then
\begin{align}
  \left\|\what{R}_{k}(t)(\varphi)(q) \right\|_E & \le \int_{\Delta_k(t)} \left\| \what{V}_{\tau_k} \circ \dots \circ
  \what{V}_{\tau_1} (\varphi)(P_{t_0,\tau_k} (q)) \right\|_E\, d \tau_k \dots d \tau_1 \nonumber \\
  & \le \int_{\Delta_k(t)} C \, d \tau_k \dots d \tau_1 = \frac{C t^k}{k!}.\label{rt-est}
\end{align}
It follows for any $\varphi \in C^m(M,E)$ and any $q_0 \in M$, there is a neighborhood $U$ of $q_0$ on which the function $\displaystyle \frac{1}{t^{k-1}} R_{k}(t)(\varphi)$ converges uniformly to zero. The family $\what{R}_{k}(t)$ is an important example of a $\what{o}(t^{k-1})$ family of operators. In the following section, we rigorously define such families, establish their properties, and prove that $\what{R}_{k}(t) = \what{o}(t^{k-1})$.

\section{Calculus of little $o$'s}

We develop in this section the theory of operators of type $\what{o}(t^\ell)$
for $C^m$ smooth manifolds $M$.
We need the following definition

\begin{definition}\label{df-f}
  A set $\mathcal{F} \subset C^m (M,E)$ is called \emph{locally bounded} at  $q_0 \in M$ if there exists a coordinate chart $(\mathcal{O}, \psi)$ with $q_0 \in \mathcal{O}$ and a constant $C$ such that for any $i=0,\ldots,m$
\be{lo-c}
    \sup_{x \in \psi(\mathcal{O})} \left\|(\varphi\circ \psi^{-1})^{(i)}(x)\right\| \le C
\ee
\end{definition}

We say that a family of operators $\wha{t}: C^m(M,E)\to C^{m^\prime}(M,E)$, $t\in(-\delta,\delta)$ has a  defect $k_1:=\mbox{def } \wha{t}$ if for any $n$, $k_1\le n\le m$ and any $\varphi\in C^{\, n}(M,E)$ we have $\wha{t}\varphi\in C^{\hspace{1pt} n-k_1}(M,E)$.
A smooth vector field ${V_t}$ gives an example of the operator $\what{V_t}$ which has defect $1$.

\begin{definition}\label{lo-d}
A family of operators $\what{A}_t : C^m(M,E) \to C^{m^\prime}(M,E)$, $0<|t|<\delta$ with defect $k_1$ is called $\what{o}(t^k)$ if for any $q_0\in M$ and locally bounded at $q_0$ set $
\mathcal{F}\subset  C^m (M,E)$  there exists a coordinate chart $(\mathcal{O}, \psi)$ with
$q_0\in\mathcal{O}$ such that for any $i = 0,\ldots,m - k_1$
\be{lo-pd}
\lim_{t\to 0} \frac{1}{t^k}\norml{(\what{A}_t(\varphi)\circ \psi^{-1} (x))^{(i)}}=0
\ee
uniformly with respect to all $\varphi\in \mathcal{F}$, $x\in \psi(\mathcal{O})$. \end{definition}

The following proposition gives an important example of $\what{o}(t^k)$ operator.
\bp{rt-lo}
Let $C^m$ smooth vector field $V_t$ be locally bounded. Then
the remainder term operator $\what{R}_k(t)$ \rref{rt-d} is $\what{o}(t^{k-1})$ operator with the defect at most $k$.
\ep
\bpr Fix $q_0 \in M$ and a locally bounded at $q_0$ family of functions
$\mathcal{F}\subset C^m(M,E)$. Then there exists a constant $C$ such that
\rref{eq:bounded-remainder} holds for any $\varphi\in \mathcal{F}$. It implies that \rref{rt-est} holds for any $q\in \mathcal{O}$ where  $\mathcal{O}$ is some neighbourhood of $q_0$. This proves uniform convergence \rref{lo-pd}
for $i=0$. Similar argument demonstrates uniform convergence \rref{lo-pd}
for any $i=0,\ldots,m-k$.
\epr

Later we'll use the next properties of operators $\what{o}(t^k)$.

\bp{lop-pr}
Let $\what{o}(t^k)$ and $\what{o}(t^\ell)$  be families of operators
with defects $k_1$ and $\ell_1$ respectively. Then
 \begin{enumerate}[(i)]
   \item $\what{o}(t^k) + \what{o}(t^\ell) = \what{o}(t^{\min(k,\,\ell)})$  if $\max\{k_1,\ell_1\} \le m$;
    \item $\what{o}(t^k) \circ \what{o}(t^\ell) = \what{o}(t^{k+\ell})$
     if $k_1+\ell_1\le m$;
   \item $\displaystyle
     \what{X}_{t} \circ \what{o}(t^k) \circ \what{Y}_{t} = \what{o}(t^k)
     $ with a defect at most $k_1+2$ if $k_1 \le m-2$ and  $C^m$ smooth vector fields $X_t, Y_t$ are locally bounded;
   \item $\displaystyle
     \what{P}_{0,t} \circ \what{o}(t^k) \circ \what{Q}_{0,t} = \what{o}(t^k)$
     with a defect at most $k_1$
     if  $P_{0,t}$ and $Q_{0,t}$ are flows of locally bounded $C^m$ smooth vector fields;
   \item  $\what{P}_{0,t} = \what{Id}_E + \what{o}(1)$
   with the defect $0$ if $\what{P}_{0,t}$ is the flow of a locally bounded $C^m$ smooth vector field.
   \end{enumerate}
\ep

\bpr Let us fix $q_0\in M$, a locally bounded at $q_0$ family $\mathcal{F}\subset C^m(M,E)$
and a coordinate chart $(\mathcal{O},\psi)$.
We observe that for any $\varphi\in\mathcal{F}$,
$x\in \psi(\mathcal{O})$ and $i=0,\ldots,m-\max\{k_1,\ell_1 \}$
\[
\norml{((\what{o}(t^k)+\what{o}(t^\ell))(\varphi)\circ\psi^{-1}(x))^{(i)}}\le
\norml{(\what{o}(t^k)(\varphi)\circ\psi^{-1}(x))^{(i)}}+
\norml{(\what{o}(t^\ell)(\varphi)\circ\psi^{-1}(x))^{(i)}}
\]
This inequality implies { (i)}.

Note that the family of functions $\displaystyle \mathcal{B}:=\{\frac{1}{t^\ell}\what{o}(t^\ell)
(\varphi)
 \st \varphi\in \mathcal{F}, \ 0<|t|<\delta
 \}$ is locally bounded at $q_0$. Then { (ii)} follows immediately
 from the Definition \ref{lo-d}.

To prove (iii) we note that $\what{o}(t^k)\circ\what{Y}_t$  is $\what{o}(t^k)$
with the defect $k_1+1$. Then it is easy to see that $\what{X}_t\circ\what{o}(t^k)\circ\what{Y}_t$
is $\what{o}(t^k)$ with the defect $k_1+2$.
 The assertion (iv) follows from the obvious observation that
 $\what{o}(t^k)\circ\what{Q}_{0,t}=\what{o}(t^k)$ with the defect $k_1$ and from the uniform convergence  in \rref{lo-pd}.

The last assertion (v) follows from the integral representation \rref{oie1}
for $\what{P}_{0,t}$ and boundedness assumptions.
\epr

In the case of $C^\infty$  manifold $M$ and vector fields on it we
don't need
to use the concept of defect of operators which map $\varphi\in C^\infty(M,E)$
into $C^\infty(M,E)$.
\bd{ilb-d} For $C^\infty$  manifold $M$
 a set $\mathcal{F} \subset C^\infty (M,E)$ is called \emph{locally bounded} at  $q_0 \in M$ if for any natural $m$ there exists a coordinate chart $(\mathcal{O}, \psi)$ with $q_0 \in \mathcal{O}$ and a constant $C$ such that
\rref{lo-c} holds
for any $i=0,\ldots,m$.
\ed

\bd{ilo-d}
For $C^\infty$  manifold $M$
a family of operators $\wha{t}: C^\infty(M,E)\to C^{\infty}(M,E)$,
$t\in(-\delta,\delta)$,
 is called $\what{o}(t^k)$ if for any $q_0 \in M$,  for any locally bounded at $q_0$
 set  $\mathcal{F}\subset C^\infty (M,E)$ and for any integer $m$ there exists  a coordinate chart $(\mathcal{O}, \psi)$ with $q_0 \in \mathcal{O}$
  such that for any $i=0,\ldots, m$ the limit \rref{lo-pd} takes place
uniformly with respect to all $\varphi \in \mathcal{F}$, $x\in \psi(\mathcal{O})$.
\ed
Then we have
\bp{p4.3}
For $C^\infty$  manifold $M$ and  locally
bounded  $C^\infty$  vector fields $X_t$ and $Y_t$ assertions (i)-(v) of Proposition
\ref{lop-pr} hold with $k_1=0$, $\ell_1=0$ and $m=\infty$.
\ep

\section{Commutators of flows and vector fields}

 Let $P_t$ and $Q_t$ be flows on a $C^m$  manifold $M$, generated by $C^m$  vector fields $X$ and $Y$, so that $\what{P}_0=\what{Id}$, $\what{Q}_0=\what{Id}$,
\begin{equation*}
  \frac{d \what{P}_t}{dt} = \what{P}_t \circ \what{X}, \hspace{1cm}  \frac{d \what{Q}_t}{dt} = \what{Q}_t \circ \what{Y}.
\end{equation*}
Following \cite{MR1222291}, we define a \emph{bracket of flows} $\left[P_t, Q_t\right] = Q_t^{-1} \circ P_t^{-1} \circ Q_t \circ P_t$
and we note that
\begin{equation*}
  \what{\left[P_t, Q_t\right]} = \what{P}_t \circ \what{Q}_t \circ \what{P}_t^{-1} \circ \what{Q}_t^{-1}.
\end{equation*}
In the case of finite dimensional manifolds, it follows from  the classical result that
\be{com-d}
   \what{\left[P_t, Q_t\right]} = \what{Id} + t^2 \left[\what{X}, \what{Y}\right] + \what{o}(t^2).
\ee
In the case of infinite dimensional manifolds and flows $P^i_t$, $i = 1, \dots, k$, generated by vector fields $X_i$, the general formula for an arbitrary bracket expression $B\left(P_t^1, \dots, P_t^k\right)$ was proved by Mauhart and Michor in \cite{MR1222291}. In operator notation, the general formula is
\begin{equation}
\label{eq:bracket-expression}
  \what{B\left(P_t^1, \dots, P_t^k\right)} = \what{Id} + t^k B\left(\what{X}_1, \dots, \what{X}_k\right) + \what{o}(t^k).
\end{equation}
Here we use the Chronological Calculus to prove this formula. In particular, we will establish the following:
\begin{theorem}[Mauhart and Michor]
\label{thm:mm}
  Let $M$ be an $C^m$ Banach manifold and $X_1, \dots, X_k$, $k\le m$, be $C^m$ smooth
  vector fields. Then for any bracket expression $B\left(P_t^k, \dots, P_t^1\right)$ we have the presentation \eqref{eq:bracket-expression} where $\what{o}(t^{k})$ has  defect at most $k-1$.
\end{theorem}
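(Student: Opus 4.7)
The plan is to proceed by strong induction on the number $k$ of flows appearing in the bracket expression $B$, using the Volterra series expansion from Proposition 4.2 as the basic computational tool. For an autonomous $C^m$ vector field $X$ with flow $P_t$, Proposition 4.2 gives $\widehat{P}_t = \widehat{Id} + t\widehat{X} + \tfrac{t^2}{2}\widehat{X}\circ\widehat{X} + \ldots + \widehat{R}_n(t)$ with $\widehat{R}_n(t) = \widehat{o}(t^{n-1})$ of controlled defect; since $P_t^{-1}=P_{-t}$ in the autonomous case, the expansion of $\widehat{P_t^{-1}}$ is obtained by substituting $-t$ and so exhibits alternating signs.

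For the base case $k=2$, the identity $[P_t^1,P_t^2] = P_t^{2,-1}\circ P_t^{1,-1}\circ P_t^2 \circ P_t^1$ yields $\widehat{[P_t^1,P_t^2]} = \widehat{P}_t^1 \circ \widehat{P}_t^2 \circ \widehat{P_t^{1,-1}} \circ \widehat{P_t^{2,-1}}$. Expanding each of the four factors to order $t^2$ and multiplying out, the order-$t^0$ term is $\widehat{Id}$, the order-$t^1$ contributions $\widehat{X}_1+\widehat{X}_2-\widehat{X}_1-\widehat{X}_2$ cancel, the $\widehat{X}_i\circ\widehat{X}_i$ diagonal terms at order $t^2$ cancel pairwise, and the surviving $t^2$ terms collect to $\widehat{X}_1\circ\widehat{X}_2 - \widehat{X}_2\circ\widehat{X}_1 = [\widehat{X}_1,\widehat{X}_2]$. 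All cross products involving Volterra remainders are absorbed into $\widehat{o}(t^2)$ by parts (ii)--(iv) of Proposition 4.3.

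For the inductive step, a bracket expression of depth at least two decomposes as $B = [B_1,B_2]$ with $B_i$ involving $k_i$ flows and $k_1 + k_2 = k$. The hypothesis gives $\widehat{B_i} = \widehat{Id} + t^{k_i}\widehat{A_i} + \widehat{o}(t^{k_i})$ with $\widehat{A_i} = B_i(\widehat{X}_\cdot)$, and from $[P,Q]^{-1} = [Q,P]$ together with antisymmetry of the Lie bracket one sees by an auxiliary induction on $k_i$ that $B_i^{-1}$ is again a bracket expression of $k_i$ flows, yielding $\widehat{B_i^{-1}} = \widehat{Id} - t^{k_i}\widehat{A_i} + \widehat{o}(t^{k_i})$. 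Substituting into $\widehat{[B_1,B_2]} = \widehat{B_1}\circ\widehat{B_2}\circ\widehat{B_1^{-1}}\circ\widehat{B_2^{-1}}$ and repeating the algebraic manipulation of the base case with $t$ replaced by $t^{k_i}$ in the $i$-th factor, the $O(t^{k_i})$ contributions cancel in pairs and the leading nontrivial term at order $t^{k_1+k_2}=t^k$ is $[\widehat{A_1},\widehat{A_2}] = \widehat{B(X_1,\ldots,X_k)}$, with all remaining cross products absorbed into $\widehat{o}(t^k)$ using Proposition 4.3.

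The main obstacle I expect is the careful tracking of the defect through the inductive compositions. A single composition with a vector field operator can raise the defect by up to two (Proposition 4.3(iii)), while flow conjugation preserves it (Proposition 4.3(iv)); establishing the stated bound of $k-1$ requires exploiting the bracket cancellations at each stage of the recursion, which collapse a sum of high-defect operators into a single defect-$1$ vector field, much as $\widehat{X}_1\circ\widehat{X}_2 - \widehat{X}_2\circ\widehat{X}_1$ reduces to the defect-$1$ operator $\widehat{[X_1,X_2]}$. The bookkeeping for this defect reduction, propagated through the recursion so that the accumulated remainder never exceeds defect $k-1$, is where the proof must be done most attentively.
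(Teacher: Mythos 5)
Your proposal follows the same overall scheme as the paper: reduce to a single-bracket lemma, expand both families and their inverses in operator form, cancel lower-order terms algebraically, and absorb the rest into $\what{o}(t^{m+n})$ using the properties of the little-$o$ calculus (Proposition~\ref{lop-pr}), then apply the lemma inductively to an arbitrary bracket expression. The tactics differ in a few places, and one is worth highlighting. The paper's intermediate step, Proposition~\ref{prop:one-bracket}, is stated for abstract families of local diffeomorphisms satisfying only $\what{P}_t=\what{Id}+t^m\what{X}+\what{o}(t^m)$, with the inverse handled separately via the preceding proposition on $\what{P}_t^{-1}=\what{Id}-t^m\what{X}+\what{o}(t^m)$. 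This lets the induction run without ever needing to recognize $B_i^{-1}$ as a bracket expression in its own right; your detour through $[P,Q]^{-1}=[Q,P]$ and an auxiliary induction is logically fine but is extra machinery the paper avoids. Second, the paper does not expand anything to second Volterra order. Writing $\what{P}_t^{-1}=\what{Id}-\what{V}_1$ and $\what{P}_t=\what{Id}+\what{V}_2$, the identity $\what{P}_t\circ\what{P}_t^{-1}=\what{Id}$ forces $-\what{V}_1+\what{V}_2-\what{V}_2\circ\what{V}_1=0$, and this single algebraic cancellation eliminates all diagonal contributions at once; the surviving term is $\what{V}_1\circ\what{W}_1-\what{W}_2\circ\what{V}_1$, which becomes $t^{m+n}[\what{X},\what{Y}]$ up to $\what{o}(t^{m+n})$. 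Your base-case bookkeeping with the explicit $\tfrac{t^2}{2}\what{X}_i^2$ coefficients arrives at the same place, but the description that the diagonal $t^2$ terms ``cancel pairwise'' is not quite right: they cancel against the $-\what{X}_i^2$ contributions arising from the cross-products of adjacent factors, not against one another, so be careful if you write this out in full. Finally, you are right that the defect bound $k-1$ requires attention; the paper's exposition is equally terse on this point, asserting the bound in the theorem and leaving the tracking implicit in the explicit remainder formula \eqref{mmf-r}, so this is not a gap specific to your argument.
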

The advantage of our approach follows from the fact that the main part of the proof is reduced to algebraic computations. Moreover, an algorithm for deriving
a representation for remainder term in \rref{eq:bracket-expression} is given.

We will need the following results for families of local diffeomorphisms of the form
\begin{equation}
\label{eq:form-of-PQ}
  \what{P}_t = \what{Id} + t^m \what{X} + \what{o}(t^m) \hspace{1cm} \what{Q}_t = \what{Id} + t^n \what{Y} + \what{o}(t^n).
\end{equation}

\begin{proposition}
Let ${X}$ be a $C^m$  vector field. Then
\begin{equation}
\label{eq:sign-on-inverse}
  \what{P}_t^{-1} = \what{Id} - t^m \what{X} + \what{o}(t^m).
\end{equation}
\end{proposition}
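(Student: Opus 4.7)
The plan is to extract the expansion of $\what{P}_t^{-1}$ by substituting a crude zeroth-order estimate of $\what{P}_t^{-1}$ into an exact algebraic identity, and then controlling the error with the calculus of $\what{o}$'s in Proposition~\ref{lop-pr}. Specifically, starting from $\what{P}_t\circ\what{P}_t^{-1} = \what{Id}$ and composing on the left with $\what{P}_t^{-1}$, I rearrange to obtain the exact identity
$$\what{P}_t^{-1} = \what{Id} - \what{P}_t^{-1}\circ\bigl(t^m\what{X} + \what{o}(t^m)\bigr).$$
Thus, once I insert a sufficiently good estimate for the $\what{P}_t^{-1}$ on the right, the right-hand side collapses to the desired expansion.

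The crude estimate I need is $\what{P}_t^{-1} = \what{Id} + \what{o}(1)$. In the setting in which this proposition will be used (namely for $P_t$ arising as a bracket expression of flows as in Theorem~\ref{thm:mm}), $P_t^{-1}$ is again a composition of flows of locally bounded $C^m$ vector fields, so Proposition~\ref{lop-pr}(v) applies directly and supplies this bound. Substituting $\what{P}_t^{-1} = \what{Id} + \what{o}(1)$ back into the identity above and using items (ii)--(iii) of Proposition~\ref{lop-pr} together with the observation that $t^m\cdot\what{o}(1) = \what{o}(t^m)$ (immediate from Definition~\ref{lo-d}),
$$\what{P}_t^{-1}\circ t^m\what{X} = t^m\what{X} + t^m\,\what{o}(1)\circ\what{X} = t^m\what{X} + \what{o}(t^m),$$
and similarly $\what{P}_t^{-1}\circ\what{o}(t^m) = \what{o}(t^m) + \what{o}(1)\circ\what{o}(t^m) = \what{o}(t^m)$. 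Combining these yields $\what{P}_t^{-1} = \what{Id} - t^m\what{X} + \what{o}(t^m)$, as claimed.

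The main obstacle I foresee is the bookkeeping of defects from Definition~\ref{lo-d}: each composition with the vector field $\what{X}$ raises the defect by one, and Proposition~\ref{lop-pr} imposes hypotheses on the sum of defects relative to $m$. Since by assumption $X$ is $C^m$ and we only track terms up to $\what{o}(t^m)$, the defect increases remain bounded by a small constant at each step, so the hypotheses of Proposition~\ref{lop-pr} are satisfied throughout the computation.
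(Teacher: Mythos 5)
Your argument is correct but takes a genuinely different route from the paper's. The paper proves \eqref{eq:sign-on-inverse} by introducing an auxiliary pair of flows: it lets $S_t$ be the honest flow of $X$ (so $\what{S}_t = \what{Id} + t\what{X} + \what{o}(t)$), sets $T_t = S_t^{-1}$ (the flow of $-X$, with $\what{T}_t = \what{Id} - t\what{X} + \what{o}(t)$), observes $\what{P}_t = \what{S}_{t^m} + \what{o}(t^m)$, and then sandwiches the error by composing with $\what{P}_t^{-1}$ on the right and $\what{T}_{t^m}$ on the left to conclude $\what{P}_t^{-1} = \what{T}_{t^m} + \what{o}(t^m)$. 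You instead rearrange the exact identity $\what{P}_t^{-1}\circ\what{P}_t = \what{Id}$ into a self-referential formula for $\what{P}_t^{-1}$ and bootstrap with the crude estimate $\what{P}_t^{-1} = \what{Id} + \what{o}(1)$ from Proposition \ref{lop-pr}(v). Both methods work and both implicitly need the same thing the proposition's wording doesn't spell out, namely that $\what{P}_t^{-1}$ is under enough control that its composition with a $\what{o}(t^m)$ family is again $\what{o}(t^m)$; the paper needs this in the ``apply $\what{P}_t^{-1}$ from the right'' step, and you need it in the bootstrap. The paper's auxiliary flows make the sign flip transparent at one stroke; your approach is more elementary and avoids introducing $S_t$, $T_t$, at the cost of explicitly tracking defects.

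Two small slips worth fixing. First, ``starting from $\what{P}_t\circ\what{P}_t^{-1}=\what{Id}$ and composing on the left with $\what{P}_t^{-1}$'' does not produce your claimed identity: that composition gives $\what{P}_t^{-1}=\what{P}_t^{-1}$. What you want is to start from $\what{P}_t^{-1}\circ\what{P}_t=\what{Id}$, substitute $\what{P}_t=\what{Id}+t^m\what{X}+\what{o}(t^m)$, and move the $\what{P}_t^{-1}\circ(t^m\what{X}+\what{o}(t^m))$ term to the other side; no extra composition is needed. Second, you appeal to Proposition \ref{lop-pr}(v) to obtain $\what{P}_t^{-1}=\what{Id}+\what{o}(1)$, but (v) is stated only for flows of locally bounded vector fields, whereas \eqref{eq:form-of-PQ} is phrased for an arbitrary family of local diffeomorphisms with the stated expansion. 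You explicitly restrict to the application (compositions of flows as in Theorem \ref{thm:mm}), which is fine for the paper's purposes, but this does narrow the statement being proved relative to how it is formally stated; the paper's proof has the same hidden hypothesis, just less visibly.
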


\begin{proof}
  Consider flows $S_t$ and $T_t$ defined by $\dfrac{d \what{S}_t}{dt} = \what{S}_t \circ \what{X}$, $\what{S}_0 = \what{Id}$, and $\dfrac{d \what{T}_t}{dt} = - \what{X} \circ \what{T}_t$, $\what{T}_0 = \what{Id}$. Then $\what{T}_t = \what{S}_t^{-1}$, $\what{S}_t = \what{Id} + t \what{X} + \what{o}(t)$, and $\what{T}_t = \what{Id} - t \what{X} + \what{o}(t)$. In particular,
$ \what{P}_t = \what{S}_{t^m} + \what{o}(t^m)$.

  By applying $\what{P}_t^{-1}$ from the right, we get
$
    \what{Id} = \what{S}_{t^m} \circ \what{P}_t^{-1} + \what{o}(t^m)$.
Then by   applying $\what{T}_{t^m}$ from the left, we get
$
    \what{T}_{t^m} = \what{P}_t^{-1} + \what{o}(t^m)$,
  and so $\what{P}_t^{-1} = \what{Id} - t^m \what{X} + \what{o}(t^m)$.
\end{proof}

\begin{proposition}
\label{prop:one-bracket}
  Let families of local diffeomorphisms $P_t$ and $Q_t$ satisfy \eqref{eq:form-of-PQ}. Then
  \begin{equation}
  \label{eq:one-bracket}
    \what{\left[P_t, Q_t \right]} = \what{Id} + t^{m+n} \left[\what{X}, \what{Y} \right] + \what{o}\left(t^{m+n}\right).
  \end{equation}
\end{proposition}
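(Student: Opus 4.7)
The plan is to exploit the algebraic identity
\[
\what{P}_t\circ\what{Q}_t\circ\what{P}_t^{-1}\circ\what{Q}_t^{-1} - \what{Id} = \bigl(\what{P}_t\circ\what{Q}_t - \what{Q}_t\circ\what{P}_t\bigr)\circ\what{P}_t^{-1}\circ\what{Q}_t^{-1},
\]
which reduces the problem of expanding the full commutator to the easier task of expanding the ``flat'' commutator $\what{P}_t\circ\what{Q}_t - \what{Q}_t\circ\what{P}_t$ and then sweeping in the inverses as a tail factor. The previous proposition already tells us that $\what{P}_t^{-1} = \what{Id} - t^m\what{X} + \what{o}(t^m)$ and $\what{Q}_t^{-1} = \what{Id} - t^n\what{Y} + \what{o}(t^n)$, so both factors on the right are of the same type as $\what{P}_t$ and $\what{Q}_t$.

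Writing $A := t^m\what{X}$, $B := t^n\what{Y}$, $\alpha := \what{P}_t - \what{Id} - A = \what{o}(t^m)$ and $\beta := \what{Q}_t - \what{Id} - B = \what{o}(t^n)$, I first expand
\[
\what{P}_t\circ\what{Q}_t - \what{Q}_t\circ\what{P}_t = [A,B] + [A,\beta] + [\alpha,B] + [\alpha,\beta],
\]
where $[\,\cdot\,,\,\cdot\,]$ denotes the operator commutator. The leading term is $[A,B] = t^{m+n}[\what{X},\what{Y}]$. For the three error commutators I invoke Proposition \ref{lop-pr}: by (iii), $\what{X}\circ\what{o}(t^n)$ and $\what{o}(t^n)\circ\what{X}$ are $\what{o}(t^n)$, and the scalar factor $t^m$ then upgrades $[A,\beta]$ to $\what{o}(t^{m+n})$; by the symmetric argument $[\alpha,B] = \what{o}(t^{m+n})$; and by (ii), $[\alpha,\beta] = \what{o}(t^{m+n})$ directly. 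Hence
\[
\what{P}_t\circ\what{Q}_t - \what{Q}_t\circ\what{P}_t = t^{m+n}[\what{X},\what{Y}] + \what{o}(t^{m+n}).
\]

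The last step is to multiply this on the right by $\what{P}_t^{-1}\circ\what{Q}_t^{-1}$. Expanding both factors as $\what{Id} - A + \what{o}(t^m)$ and $\what{Id} - B + \what{o}(t^n)$, each cross term that is not the $\what{Id}\circ\what{Id}$ contribution picks up an extra factor of at least $t^{\min(m,n)}$ beyond the base order $t^{m+n}$, so by (ii)--(iii) of Proposition \ref{lop-pr} every such term is absorbed into $\what{o}(t^{m+n})$. This yields exactly \eqref{eq:one-bracket}.

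The main obstacle is bookkeeping: one must verify that each term in the triple expansion really is $\what{o}(t^{m+n})$ under the precise definition \ref{lo-d}, and also check that the defects of the composed operators remain within the admissible range $k_1\le m$ so that all compositions land in a space on which the little-$o$ estimates have been defined. Since $X$ and $Y$ are $C^m$ vector fields with $m+n\le 2m$, both operations stay within range and the remainder inherits defect at most $m+n-1$, consistent with the formulation of Theorem \ref{thm:mm}.
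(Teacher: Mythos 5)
Your proof is correct, and your initial algebraic step is a genuine departure from the paper's. The paper substitutes $\what{P}_t^{\pm1}=\what{Id}_M\pm\what{V}_i$ and $\what{Q}_t^{\pm1}=\what{Id}_M\pm\what{W}_i$ directly into $\what{P}_t\circ\what{Q}_t\circ\what{P}_t^{-1}\circ\what{Q}_t^{-1}$, expands all ten resulting terms, and then cancels the unwanted first-order part by hand via the exact identity $-\what{V}_1+\what{V}_2-\what{V}_2\circ\what{V}_1=0$ (which encodes $\what{P}_t\circ\what{P}_t^{-1}=\what{Id}_M$); the proof ends with an explicit formula for the remainder $\what{R}$ and a term-by-term order check. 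You instead factor $\what{[P_t,Q_t]}-\what{Id}=\bigl(\what{P}_t\circ\what{Q}_t-\what{Q}_t\circ\what{P}_t\bigr)\circ\what{P}_t^{-1}\circ\what{Q}_t^{-1}$, which makes the cancellation of everything below the commutator automatic, and then you only have to expand a genuine commutator $[A+\alpha,B+\beta]$ plus a tail factor that is $\what{Id}+\what{o}(1)$. Both routes lead to the same asymptotic bookkeeping with Proposition~\ref{lop-pr}, and both yield an explicit remainder, but your decomposition is cleaner because it isolates the bracket structure before any estimates are made. One small caveat: you cite Proposition~\ref{lop-pr}(iii) for the one-sided facts ``$\what{X}\circ\what{o}(t^n)=\what{o}(t^n)$'' and ``$\what{o}(t^n)\circ\what{X}=\what{o}(t^n)$,'' whereas (iii) is stated for the two-sided sandwich $\what{X}_t\circ\what{o}(t^k)\circ\what{Y}_t$. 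The one-sided statements are in fact what the paper proves on the way to (iii), so the gap is only one of citation precision, not of substance; it is worth stating explicitly that you are invoking the intermediate lemmas from the proof of (iii) rather than the proposition as stated.
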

\begin{proof}
  Recall that $\what{\left[P_t, Q_t\right]} :=  \what{P}_t \circ  \what{Q}_t  \circ\what{P}_t^{-1}\circ \what{Q}_t^{-1}$. Write
\[
    \what{P}_t^{-1} = \what{Id}_M - \what{V}_1, \hspace{.5cm} \what{P}_t = \what{Id}_M + \what{V}_2, \hspace{.5cm} \what{Q}_t^{-1} = \what{Id}_M - \what{W}_1, \hspace{.5cm} \what{Q}_t = \what{Id}_M + \what{W}_2.
\]
  Then
  \begin{equation}
  \label{eq:vw-brackets}
    \what{\left[P_t,Q_t\right]} =\left(\what{Id}_M + \what{V}_2\right) \circ\what{Q}_t \circ  \left(\what{Id}_M - \what{V}_1\right) \circ  \what{Q}_t^{-1} .
  \end{equation}
  Now,
$
    \what{Q}_t \circ \left(\what{Id}_M - \what{V}_1\right) \circ \what{Q}_t^{-1}  = \what{Id}_M - \what{Q}_t \circ \what{V}_1 \circ \what{Q}_t^{-1}
     = \what{Id}_M - \left(\what{Id}_M + \what{W}_2\right) \circ \what{V}_1 \circ \left( \what{Id}_M - \what{W}_1\right)
     = \what{Id}_M - \what{V}_1 -  \what{W}_2 \circ \what{V}_1 + \what{V}_1\circ \what{W}_1  + \what{W}_2 \circ \what{V}_1 \circ \what{W}_1$.

  Substituting this expression into \eqref{eq:vw-brackets} gives
  \begin{align*}
&   \what{ \left[P_t, Q_t\right] } =  \left(\what{Id}_M + \what{V}_2\right)\circ \left(\what{Id}_M - \what{V}_1 -  \what{W}_2 \circ \what{V}_1 + \what{V}_1\circ \what{W}_1  + \what{W}_2 \circ \what{V}_1 \circ \what{W}_1\right)\\
    & = \what{Id}_M - \what{V}_1 -  \what{W}_2 \circ \what{V}_1 + \what{V}_1\circ \what{W}_1  + \what{W}_2 \circ \what{V}_1 \circ \what{W}_1  +\what{V}_2 -\what{V}_2\circ \what{V}_1\\ & - \what{V}_2\circ \what{W}_2 \circ \what{V}_1 + \what{V}_2\circ\what{V}_1\circ \what{W}_1
      + \what{V}_2\circ \what{W}_2 \circ \what{V}_1 \circ \what{W}_1
  \end{align*}
But
\[
  - \what{V}_1 + \what{V}_2 - \what{V}_2 \circ \what{V}_1 = \what{P}_t^{-1} - \what{Id}_M + \what{P}_t - \what{Id}_M - \left(\what{P}_t - \what{Id}_M\right) \circ \left( \what{Id}_M - \what{P}_t^{-1} \right) = 0.
\]

Therefore,
\[
  \what{ \left[P_t, Q_t\right] } = \what{Id}_M -  \what{W}_2 \circ \what{V}_1 + \what{V}_1\circ \what{W}_1  + \what{R}
\]
  where
\be{mmf-r}
 \what{R}:= \what{W}_2 \circ \what{V}_1 \circ \what{W}_1  - \what{V}_2\circ \what{W}_2 \circ \what{V}_1 + \what{V}_2\circ\what{V}_1\circ \what{W}_1
      + \what{V}_2\circ \what{W}_2 \circ \what{V}_1 \circ \what{W}_1
\ee
 By \eqref{eq:form-of-PQ} and \eqref{eq:sign-on-inverse} we have
$\what{V}_1 = t^m \what{X} + \what{o}(t^m)$, $ \what{V}_2 = t^m \what{X} + \what{o}(t^m)$,
$\what{W}_1 = t^n \what{Y} + \what{o}(t^n)$, $\what{W}_2 = t^n \what{Y} + \what{o}(t^n)$.

By using Proposition \ref{prop:one-bracket} we obtain from previous relations and \rref{mmf-r} that $
 \what{R}=\what{o} (t^{m+n})$ and
\[
\what{V}_1\circ \what{W}_1 - \what{W}_2 \circ \what{V}_1 = t^{m+n} \left[ \what{X}, \what{Y} \right] + \what{o}(t^{m+n}).
\]

This proves \eqref{eq:one-bracket}.
\end{proof}
Applying \ref{prop:one-bracket} inductively, we obtain Theorem \ref{thm:mm}. Note that in the process of proving the theorem, we have obtained an expression for the remainder term.

\section{Chow-Rashevskii theorem for infinite-dimensional manifolds}

Consider an $n$-dimensional manifold $M$ with a sub-riemannian
{\em distribution}
$\mathcal{H} \subset TM$, which, by definition, is a vector sub-bundle of
the tangent bundle $TM$ of the manifold with an inner product on its fiber space \cite{ mon02}.
An absolutely continuous curve
$q: [0,T] \to M$ is called
{\it horizontal}  if its derivative belongs to
 $\mathcal H$
for almost all $t$.

The classical Chow-Rashevskii theorem \cite{chow,rash}
provides conditions in terms of basis vector fields $\left\{V_i\right\}_{i=1,\ldots,m}$ of
the distribution $\Hc$ and their iterated Lie brackets for connectivity of
arbitrary
two points of the sub-riemannian manifold by a horizontal curve.

Namely, let us consider the following distribution $\Lc$
which is defined point-wise as the linear span of the set generated by iterated
Lie brackets of basis vector fields $\left\{V_i\right\}_{i=1,\ldots,m}$ as follows:
\be{lie}
\Lc [V_1,\ldots,V_m](q):=\lspan \{
B(V_\ino,V_\ind,\ldots, V_{i_{k-1}},V_{i_k})(q) \st
 k=1,2,\ldots \}
\ee
The classical Chow-Rashevskii theorem states that the condition
\be{crc}
\Lc [
V_1,\ldots,V_m ](q) =TM (q) \quad \forall \ q\in M
\ee
implies the connectivity of any two points on the manifold $M$ by a horizontal
curve.

Historically this theorem has played a fundamental role in nonlinear
control theory \cite{Isi95,jur,bloch2003,MR2302744} by demonstrating that the condition \rref{crc} is a sufficient
 for the global controllability
of the following affine-control system:
\be{acs}
\dot q=\sum_{i=1}^m u_i (t) V_i (q).
\ee

Here we are interested in generalizing these sufficient conditions
for  global controllability for the case of infinite-dimensional
manifold $M$. Consider an affine control system
\be{acsi}
\dot q=\sum_{i=1}^\infty u_i(t) V_i (q)
\ee
where $V_i$ are smooth vector-fields on $M$, and $u(t):=(u_1(t),u_2(t),\ldots) $ is a control.

Let $M$ be an infinite-dimensional $C^\infty$ smooth connected  manifold
   \cite{krieg} with underlying  smooth Banach space
$E$. The concept of smooth Banach space will be discussed in the next subsection.

 A control $u(t)$ is called \emph{admissible} if it is piecewise constant
and at each $t$ only a finite number of its components $u_i(t)$ are different from zero and take values $+1$ or $-1$. The set of all admissible controls
is denoted $\Uc$.

Note that for any initial point $q_0$ for any admissible control $u(t)$
there exists (at least locally) a unique solution $q(t;q_0,u)$ of the control
system \rref{acsi}. This solution we call a {\em trajectory}.
A {\em reachability set} for the initial point $q_0$
\be{set}
\Rc (q_0):=\{ q(t;u,q_0) \st  \forall \ t\ge 0, \ \forall \ u \in \Uc \}
\ee
consists of all points of all trajectories of
 \rref{acsi} corresponding to all admissible controls
 $u \in \Uc$. Thus, the set $\Rc(q_0)$ consists of all points to which the control system can be driven from the point $q_0$ using admissible controls.

 Here we provide infinitesimal conditions  in terms
of vector fields $\left\{V_i \right\}_{i=1,2,\ldots}$, their Lie brackets and bracket iterations similar to \rref{crc} which imply global approximate controllability
of the system \rref{acsi}.

\bd{d1}
Control system \rref{acsi} is called global approximate controllable
if for any
 $q_0 \in M$
\be{ac-d}
\overline{\Rc (q_0)}=M
\ee
\ed

Thus, global approximate controllability of system
 \rref{acsi} means that for arbitrary points
$q_0, q_1 \in M$ and any open neighbourhood  $\mathcal O$
of the point $q_1$
there exists an admissible control $u \in \Uc$ such that
at  some moment $T$ the trajectory
 $x(T;u,x_0)$ enters the neighbourhood   $\mathcal O$.

 For the family of smooth vector fields $V_i$, $i=1,2,\ldots$  define
the following set similar to \rref{lie}
\be{liei}
\Lc [V_1,V_2,\ldots](q):=\lspan \{
B(V_\ino,V_\ind,\ldots, V_{i_{k-1}},V_{i_k})(q) \st
 k=1,2,\ldots \}
\ee

Note that in the definition \rref{liei} of $\Lc$ we consider only brackets $B$ which are well defined.

\bt{th1} { Let $M$ be an infinite-dimensional smooth
manifold associated with the smooth Banach space $E$ and a
smooth affine-control system
\rref{acsi} satisfies
\be{crci}
\overline{\Lc [V_1,V_2,\ldots](q)}=T_q M \quad \forall q\in M
\ee
Then system
\rref{acsi} is globally approximate controllable.
}
\et

The proof of this variant of Chow-Rashevskii theorem for infinite-dimensional
manifolds is based on the use of some constructions of nonsmooth analysis \cite{clswbook1,schiro}
and a characterization of the property of strong invariance \cite{clswbook1} of sets with respect to solutions of the control system \rref{acsi} and is similar to the proof of the analogous result for the case of Hilbert space $E$ \cite{led04}.

\vspace{15pt}

\subsection{Nonsmooth analysis on smooth manifolds and strong invariance of sets}

Concepts of \emph{strong} and \emph{weak invariance} play important role
in control theory (see \cite{clswbook1} for finite-dimensional results
 and \cite{clr} for related results on approximate invariance in Hilbert spaces).
A set $S\subset M$ is called \emph{strongly invariant} with respect to
trajectories of a control system \rref{acsi} if for any $q_0\in S$ and any
admissible control $u\in \Uc$ the trajectory $q(t;q_0,u)$ stays in $S$ for
all $t>0$ sufficiently small.
Note that the fact that  reachability set $\Rc(q_0)$ \rref{set} is strongly invariant follows immediately from its definition.

Here we provide infinitesimal conditions for strong invariance of a closed set $S$ in terms of normal vectors to
$S$ and iterated Lie brackets of vector fields $V_i$, $i=1,2,\ldots$.
In order to define such normal vectors we need to recall some facts from
nonsmooth analysis on smooth Banach spaces and on smooth infinite-dimensional
manifolds.

A Banach space $E$ is called \emph{smooth}  if there exists a non-trivial Lipschitz
$C^1$-smooth bump function (that is, a function with a bounded support). For example, Banach spaces with differentiable norm are smooth Banach spaces as,
in particular, Hilbert spaces are.

A subgradient $\zeta\in E^*$ of function $f: E\to (-\infty,+\infty]$ at the point $x$ is defined as follows: let there exist a $C^1$-smooth function $g:E\to\Rb$
such that the function $f-g$ attains a local minimum at $x$ then the subgradient
of $f$ at $x$ is the vector $\zeta=g' (x)$. The set of all subgradients at $x$ is called a \emph{subdifferential} $\partial_F f(x)$. It can be shown that
for lower semicontinuous functions $f$ subdifferentials are nonempty on a set which is dense in the domain of $f$. The detailed calculus of such
subdifferentials can be found in the monographs \cite{borzhu,clswbook1,schiro}.
The monograph \cite{clswbook1} is dedicated to the calculus of proximal subgradients in Hilbert spaces.

We also need the following mean-value inequality for lower semicontinuous function $f$: for any $r, x,y$ such that $r<f(y)-f(x)$ and for any $\delta>0$ there
exists a point $z\in [x,y]+\delta B$ and $\zeta\in\partial_F (z)$ such that
\be{mvi}
r< \ip{\zeta}{y-x}
\ee
(see \cite{clswbook1} for the original Hilbert space case and \cite{borzhu} for general smooth Banach space case).

The nonsmooth analysis for nonsmooth  semicontinuous functions on smooth finite-dimensional
manifolds was suggested in \cite{ledzhu07}. But the concept of subgradient of lower semicontinuous function from \cite{ledzhu07} is easily adapted for infinite-dimensional manifolds: $\zeta \in  T^{*}_q M$ is a subgradient of $f:M\to (-\infty,+\infty]$ if there exists a locally $C^1$-smooth function $g:M\in\Rb$ such that $f-g$ attains its local minimum at $q$ and $\zeta=d g(q)$.

Let $S\subset M$ be a closed subset on $M$ then the characteristic function
\[
\chi_S (q)=0, \quad q\in S, \quad \quad
\chi_S (q)=+\infty, \quad q\not\in S,
\]
is lower semicontinuous function on $M$.
\bd{nv} Vector $\zeta\in T_q^{*}M$ is called a normal vector
to a set $S$ at $q$ if $\zeta\in\partial_F \chi_S (q)$.
\ed
The set of all normal vectors is a cone and it is called a normal cone
$N_q S$
to the $S$ at $q$.

\bp{nvnz} Let $q'\in S$ be a boundary point of the closed set $S$ then
 any neighbourhood $\Oc$ of $q'$ contains a point $q\in S$ such that
 there exists a normal vector $\zeta\neq 0$, $\zeta\in N_q S$.
\ep

The proof of this proposition follows immediately from the mean-value inequality
\rref{mvi} and is left to a reader.

\bt{csit} The closed set $S\subset M$ is strongly invariant
with respect to solutions of the control system \rref{acsi} if
and only if
\be{csi}
\ip{\zeta}{B(V_\ino,V_\ind,\ldots, V_{i_{k-1}},V_{i_k})(q)}=0
\ee
for any  for any iterated Lie bracket of vector fields $V_i$, any normal vector
$\zeta\in N_q S$ and any $q\in S$.
\et

\bpr
 Let us assume that the set $S$ is strongly invariant, $q\in S$ and $\zeta=d g(q)\in N_q S$. This implies that
\be{ch1}
\chi_S(q') -\chi_S(q) \ge g(q')-g(q)
\ee
for some smooth function $g$ and all $q'$ near $q$.

Let us fix some iterated Lie bracket  $B(V_\ino,V_\ind,\ldots, V_{i_{k-1}},V_{i_k})(q)$ as in \rref{csi}, denote it $v$
and relate to it
an appropriate iterated flow bracket as in Section 5. For arbitrarily small $t>0$ we can find an
admissible control $u\in\Uc$ associated with this iterated flow
bracket such that we have in accordance with Theorem \rref{thm:mm}
\[
g(q(t;q,u))=g(q)+t^k \ip{d g(q)}{v}+o(t^k)
\]
Then we obtain from \rref{ch1} that
\[
t^k \ip{d g(q)}{v}+o(t^k)\le 0
\]
Of course, we can easily derive  \rref{csi} from this inequality.

To prove that conditions \rref{csi} imply strong invariance of the set
$S$ with respect to solutions of the affine control system  \rref{acsi}, we
can use methods of \cite{clr} for the characterization of strong and weak
approximate invariance of sets with respect to solutions of differential
inclusions.
\epr

\subsection{Proof  of an infinite-dimensional variant of  Chow-Rashevskii theorem}

Consider the reachability set $\Rc(q_0)$ \rref{set} and recall that this set is strongly invariant.
Note that for a fixed admissible control $u$, the function $q\to q(t;q,u)$ is continuous. This implies the important fact that the closure of the reachability set $\overline{\Rc (q_0)}$ is also strongly invariant.

Now let us assume that Theorem \ref{th1} is not true and $ \overline{\Rc (q_0)}\neq M$ for some $q_0\in M$. This implies the existence of some border point $q'$ of
$\overline{\Rc (q_0)}$.
Due to Proposition \ref{nvnz} there exists a point $q\in \overline{\Rc (q_0)}$
and a nonzero normal vector $\zeta$  at $q$ to it.
But due to strong invariance of $\overline{\Rc (q_0)}$ we have for the normal vector $\zeta$ that
 \rref{csi} holds for any iterated Lie bracket. In view of condition
 \rref{crci} it implies that $\zeta=0$ and this contradiction proves Theorem
 \ref{th1}.

\comment{ 

}

\end{document}